\newtheorem{theorem}{Theorem}[section]
\newtheorem{lemma}[theorem]{Lemma}
\newtheorem{remark}[theorem]{Remark}
\newtheorem{prop}[theorem]{Proposition}
\newtheorem{corollary}[theorem]{Corollary}
\numberwithin{equation}{section}
\newcommand{\D}{{\mathbb D}}
\newcommand{\C}{{\mathbb C}}
\newcommand{\N}{{\mathbb N}}
\newcommand{\cL}{{\mathcal L}}
\newcommand{\g}{\gamma}
\newcommand{\wh}{\widehat}
\newcommand{\vp}{\varphi}
\newcommand{\si}{\sigma}
\newcommand{\su}{\subseteq}
\newcommand{\la}{\langle}
\newcommand{\ra}{\rangle}
\newcommand{\wt}{\widetilde}
\newcommand{\sC}{\mathsf{C}}
\newcommand{\ov}{\overline}
\newcommand\Ker{\mathop{\rm Ker}}
\newcommand{\Bo}{\hfill $\Box$}
\newcommand{\abs}[1]{\left|#1\right|}
\newcommand{\op}{\operatorname}
\begin{document}

\title{The Ces\`aro operator in growth  Banach spaces of analytic functions}\footnote{The paper has been accepted for publication in Integral Equations and Operator Theory.}

\author{Angela\,A. Albanese, Jos\'e Bonet, Werner \,J. Ricker}

\thanks{\textit{Mathematics Subject Classification 2010:}
Primary 47B38; Secondary  46E15, 47A10, 47A16, 47A35.}

\keywords{Ces\`aro operator, growth Banach space of analytic functions, mean ergodic operator, optimal domain.}

\address{ Angela A. Albanese\\
Dipartimento di Matematica e Fisica ``E. De Giorgi''\\
Universit\`a del Salento- C.P.193\\
I-73100 Lecce, Italy}
\email{angela.albanese@unisalento.it}

\address{Jos\'e Bonet \\
Instituto Universitario de Matem\'{a}tica Pura y Aplicada
IUMPA \\
Universitat Polit\`ecnica de
Val\`encia\\
E-46071 Valencia, Spain} \email{jbonet@mat.upv.es}

\address{Werner J.  Ricker \\
Math.-Geogr. Fakult\"{a}t \\
 Katholische Universit\"{a}t
Eichst\"att-Ingol\-stadt \\
D-85072 Eichst\"att, Germany}
\email{werner.ricker@ku.de}
\markboth{A.\,A. Albanese, J. Bonet, W. \,J. Ricker}%
{\MakeUppercase{ }}

\begin{abstract}
The Ces\`aro operator $\sC$,  when acting in the classical growth Banach spaces  $A^{-\g}$ and $A_0^{-\g}$, for $\g > 0 $,
of analytic functions on $\D$, is investigated. Based on a detailed knowledge of their spectra (due to A.\ Aleman and
A.-M.\ Persson) we are able to determine the norms of these operators precisely. It is then possible to characterize  the
mean ergodic and related properties of $\sC$ acting in these spaces. In addition, we determine the largest Banach space of
analytic functions on $\D$ which $\sC$ maps into $A^{-\g}$ (resp.\ into $A_0^{-\g}$); this \textit{optimal domain} space
always  contains $A^{-\g}$ (resp.\ $A_0^{-\g}$) as a \textit{proper} subspace.
\end{abstract}

\maketitle

%\newpage
\markboth{A.\,A. Albanese, J. Bonet and W.\,J. Ricker}%
{\MakeUppercase{The Ces\`aro operator in growth Banach spaces}}

\section{Introduction}

Let $H(\mathbb{D})$ denote the Fr\'echet space of all holomorphic  functions $f\colon \D\to \C$ equipped with the  topology of uniform
convergence on the compact subsets of the open unit disc  $\D$. The classical Ces\`aro operator $\sC$ is given by
\begin{equation}\label{eq.op-C}
f\mapsto \sC(f)\colon z\mapsto  \frac{1}{z}\int_0^z \frac{f(\zeta)}{1-\zeta} d \zeta, \ \ \ z \in \D\setminus\{0\}, \quad {\rm and }\quad \sC(f)(0)=f(0),
\end{equation}
for $f\in H(\D)$. It is a Fr\'echet space isomorphism of $H(\D)$ onto itself. In terms of the Taylor coefficients
$\wh{f}(n):= \frac{f^{(n)}(0)}{n!}$, for $ n \in \N_0,$ of functions $f(z)= \sum_{n=0}^\infty \wh{f}(n) z^{n} \in H(\D)$ one has the description
$$
\sC (f)(z) = \sum_{n=0}^{\infty} \Big( \frac{1}{n+1} \sum_{k=0}^{n} \wh{f}(k) \Big) z^n, \ \ \ \ z \in \D.
$$

A vector space $X \subseteq H (\D)$ is called a \textit{Banach space of analytic functions on $\D$} if it is a Banach space relative to a norm for which the natural inclusion of $X$ into $H(\D)$ is continuous. Since  evaluation at points of $\D$ are continuous linear functionals on $H(\D)$, this is equivalent to each evaluation functional $\delta_z : f \mapsto f (z)$
 at a point  $ z \in \D$ being an element of the dual Banach space $X^*$ of $X$. In this case, the Ces\`aro operator
 $\sC$ is said to \textit{act in} $X$ if it maps $X$ into itself, i.e.\ $\sC(X) \subseteq X$; by the closed graph theorem $\sC$ is necessarily continuous. It is known that $\sC$ acts in many classical Banach spaces of analytic functions on $\D$; for instance, the Hardy spaces $H^p(\D), 1 \le p<\infty,$ the Bergman
 and the  Dirichlet spaces, etc.\ (see \cite{A,AC,AS1,AS2,S} and the references therein).
 It also acts in certain weighted Banach spaces of analytic functions on $\D$, such as
 weighted Hardy spaces $H^p(w), 1<p<\infty,$  and others,  \cite{AMN, AP, CR, Pe}.
  On the other hand, $\sC $  \textit{fails} to act in
  $H^\infty (\D)$  since $\sC(\mathbf{1})(z)= (1/z) \log(1/(1-z))$, for $ z \in \D$; see \cite{DS} for an investigation of $\sC$ on $H^\infty (\D)$.
 Once $\sC$ is known to act in $X$, it is desirable to identify its spectrum $\sigma(\sC)$  and its point spectrum $\sigma_{pt}(\sC)$
as these  often influence various operator theoretic properties of $\sC$ (e.g.\ power boundedness, mean ergodicity, linear dynamics, decomposability, etc).

The aim of this note is to investigate various properties of the Ces\`aro operator $\sC$ related to its action in the classical
growth Banach space $A^{-\gamma}$,   for each  $\gamma >0$, consisting of  those analytic functions on $\D$ specified by
\[
A^{-\gamma}:=\{f\in H(\D)\colon \|f\|_{-\gamma}:=\sup_{z\in \D}(1-|z|)^\gamma|f(z)|<\infty\}
\]
and its (proper)  closed subspace
\[
A_0^{-\gamma}:=\{f\in H(\D)\colon \lim_{|z|\to 1^-}(1-|z|)^\gamma |f(z)|=0\ \},
\]
equipped with the norm
$$
\|f\|_{-\gamma}:= \sup_{z\in \D}(1-|z|)^\gamma|f(z)|, \ \ \ \ f \in A^{-\gamma}.
$$
Since $(1-|z|)^\gamma \le (1- |z|^2)^\gamma \le 2^\gamma (1-|z|)^\gamma$, for $z \in \D$, these are the same spaces
(with an equivalent norm) as those treated  in \cite[Ch.\,4]{HKZ}.

The space $A_0^{-\gamma}$ coincides with the closure of the polynomials in $A^{-\gamma}$,
 \cite[Lemma 3]{SW}, and point evaluations on $\D$ belong to both $(A^{-\gamma}_0)^* $ and $(A^{- \gamma})^*$,
 \cite[Lemma 1]{SW}. Moreover, the bidual $(A_0^{-\gamma})^{**} = A^{-\gamma}$ for all $\gamma > 0$, \cite{RS}, \cite[Theorem 2]{SW}.
 Shields and Williams, \cite{SW}, proved that $A^{-\gamma}$ is isomorphic to $\ell^\infty$ and that $A_0^{-\gamma}$ is isomorphic to $c_0$,
 for each $ \g > 0 $; see also Theorem~1.1 in the paper  by Lusky, \cite{L}. The growth Banach  spaces $A^{-\gamma}$ and $A_0^{-\gamma}$ play
an important  role in connection with the interpolation and sampling of holomorphic functions,
 \cite[Ch.\,4 \& 5]{HKZ}.  They  are particular examples of weighted Banach spaces $H^\infty_v$ and $H^0_v$ of holomorphic functions
on $\D $,  which have been investigated by several authors since the work of Shields and Williams, \cite{SW};
see, for example, \cite{ BBG, BBT, BS, L} and the references therein. Observe, for each pair $0<\mu_1<\mu_2$, that $A^{-\mu_1}\su A^{-\mu_2}_0$.

Recently, Aleman and Persson have made an extensive investigation of various properties (including the spectrum)
of generalized Cesàro operators acting in a large class of Banach spaces of analytic functions on $\D$, \cite{AP}, \cite{Pe}. In
particular, their results apply to the classical Cesàro  operator $\sC$ given by \eqref{eq.op-C} when acting in the growth spaces $A^{-\gamma}, A_0^{-\gamma}$
for $\gamma > 0 $. For this setting our additional results will complement and extend their work. For instance, they show
 that $\sC$ acts in both $A^{-\g}$ (where we denote it by $\sC_{\g}$) and in $A_0^{- \g}$
(where we denote it by $\sC_{\g,0}$). This follows from Theorem 4.1 of \cite{AP} with $g(z)=-\log(1-z)$, for
$ z \in \D$,  but no quantitative estimates of the operator norms $\|\sC_{\g}\|$ and $\|\sC_{\g,0}\|$ are given. We show
in Theorem~\ref{precisenorms},   that
\begin{equation}\label{norms}
\|\sC_{\g}^n\| = \|\sC_{\g,0}^n\| = 1, \ \ {\rm for} \ \ \g \geq 1, \ \ \mbox{and} \ \ \|\sC_{\g}^n\| = \|\sC_{\g,0}^n\| = 1/\g^n, \ \  \mbox{for} \ \ 0 < \g < 1,
\end{equation}
\noindent
for each $n \in \N $. Crucial for the proof of Theorem~\ref{precisenorms}  is knowing the precise nature of the spectra
$\sigma(\sC_{\g})$ and $\sigma(\sC_{\g,0})$, which have been completely determined by Aleman and Persson, \cite{AP,Pe}.
This knowledge of the spectra, combined with (\ref{norms}), makes it possible to determine the mean ergodic properties
 of $\sC_{\g}$ and $\sC_{\g,0}$. Namely, it turns out that  both $\sC_{\g}$ and $\sC_{\g,0}$ fail to be mean ergodic when $0 < \g \leq 1$ but, both operators are
(even uniformly) mean ergodic for all $\g >1$; see Theorem \ref{ergodic} below. Somewhat surprisingly, the ranges
$\rm{Im}(I-\sC_{\g}) := \{(I-\sC_\g) (f) : f \in A^{-\g}\}$, resp.\  $\rm{Im}(I-\sC_{\g,0})$, are \textit{closed} in $A^{- \g}$, resp.\ $A_0^{- \g}$,
for all positive $\g \neq 1$ but, not for $\g =1$; see Theorem \ref{ergodic}. None of the Ces\`aro operators $C_{\g}$, $C_{\g,0}$, for $\g > 0$,
are supercyclic and hence, they also fail to be hypercyclic (cf.\ Proposition \ref{hyper}).

The final section is devoted to determining the \textit{optimal domain} (a concept introduced in \cite{CR}) for both of the operators $\sC_{\g}$ and
$\sC_{\g,0}$ for all $\g >0$. That is, we are able to identify  the \textit{largest} Banach space of analytic functions  on $\D$, denoted by
$[\sC, A^{-\g}]$, (resp.\ by  $[\sC,A_0^{-\g}]$),  which $\sC$  maps continuously into
$A^{-\g}$ (resp.\ into $A_0^{-\g}$). More specifically, it is shown in Theorem \ref{largerextbig}   that
$$
[\sC,A^{-\g}] = \{ f \in H(\D) \ : \ f(z)/(1-z) \in A^{-(\g+1)} \}, \ \ \ \ \g > 0,
$$
equipped with the  norm $\|f\|=\| f(z)/(1-z) \|_{-(\g+1)}$. A similar description is available for $[\sC,A_0^{-\g}]$; see Theorem \ref{largerextsmall}.
An important feature is that the containment $A^{-\g} \subseteq [\sC,A^{-\g}]$ is \textit{proper}, that is,
$\sC_{\g}: A^{-\g} \rightarrow A^{-\g}$ has a \textit{genuine} extension $\sC: [\sC,A^{-\g}] \rightarrow A^{-\g}$
to its optimal domain space, as is the case for $\sC_{\g, 0 }$ acting in  $A_0^{-\g} $. As a consequence,
it is shown that the nested family of spaces $A^{-\g} \subsetneqq A^{-\beta}$ (resp.\ $A_0^{-\g} \subsetneqq A_0^{-\beta}$),
for $0 <\g < \beta$, has the property that $\sC$  fails to map \textit{every} larger space
$A^{-\beta}$ (resp.\   $A_0^{-\beta}$) into $A^{-\g}$ (resp.\ into $A_0^{-\g}$).
In particular, $A^{- \beta}\nsubseteq [\sC , A^{-\g}]$ and $A_0^{-\beta} \nsubseteq [\sC, A_0^{-\g}]$ for
every $\beta > \g$. On the other hand, the Banach space of analytic functions $ [\sC , A_0^{-\g}]$ is
\textit{non-comparable} with $A^{-\g}$ (i.e.,  $A^{-\g} \nsubseteq [\sC , A_0^{-\g}]$ and $ [\sC , A_0^{-\g}] \nsubseteq A^{-\g}$)
and, akin to $A^{-\g}$, is also a \textit{proper}  subspace of the optimal domain space $ [\sC , A^{-\g}]$;
see Proposition~\ref{p36}.

Our notation for concepts from functional analysis and operator theory is standard and we  refer  to \cite{DSI,ME,MV},
for example.
For Banach spaces of analytic functions see \cite{Du,HKZ}, for mean ergodic operators  \cite{DSI,K},
and for linear dynamics \cite{BM,GP}.

\section{Mean ergodic Properties of $\sC_\g$ and $\sC_{\g,0 }$}\label{meanergodic}

As noted above,  $A^{-\gamma}$ is canonically isomorphic to the bidual $(A_0^{-\g})^{**}$ of the Banach space
$A_0^{-\gamma}$.   In terms of this biduality the operator $\sC_{\g} \colon A^{-\gamma}\to A^{-\gamma}$
is the bidual operator $(\sC_{\g, 0})^{**}$ of   $\sC_{\g,0} \colon A_0^{-\gamma}\to A_0^{-\gamma}$.

We  recall the following result of Aleman and Persson (see \cite[Theorem 4.1]{Pe} and
\cite[Theorems 4.1, 5.1 and Corollaries 2.1, 5.1]{AP}, that will be used on  several occasions.

\begin{theorem}\label{T.Pe-AP}
Let $\g > 0 $. The Ces\`aro operator  $\sC_{\g,0}\colon A_0^{-\gamma}\to A_0^{-\gamma}$
 has the following properties.
\begin{itemize}
\item[\rm (i)] $\sigma_{pt}(\sC_{\g,0})=\{\frac{1}{m}\colon m\in\N,\ m<\gamma\}$.
\item[\rm (ii)] $\sigma(\sC_{\g,0})=\sigma_{pt}(\sC_{\g,0})\cup\left\{\lambda\in \C\colon \left|\lambda -\frac{1}{2\gamma}\right|\leq \frac{1}{2\gamma}\right\}$.
\item[\rm (iii)] If $\left|\lambda -\frac{1}{2\gamma}\right|< \frac{1}{2\gamma}$
{\rm(}equivalently ${\rm Re}\left(\frac{1}{\lambda}\right)>\gamma${\rm)}, then  ${\rm Im}(\lambda I-\sC_{\g,0})$
is a closed subspace of   $A_0^{-\gamma}$ and has codimension $1$.
\end{itemize}
Moreover, the Cesàro operator $\sC_\g : A^{-\g} \to A^{-\g}$ satisfies
\begin{itemize}
\item[\rm (iv)] $\sigma_{pt}(\sC_{\g})=\{\frac{1}{m}\colon m\in\N,\ m\leq \gamma\}$, and
\item[\rm (v)] $\sigma(\sC_{\g})=\sigma(\sC_{\g,0})$.
\end{itemize}
\end{theorem}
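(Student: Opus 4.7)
The plan is to treat Theorem~\ref{T.Pe-AP} as a spectral-analysis exercise that reduces, throughout, to a single elementary ODE. From $\sC f = \lambda f$ with $\lambda\neq 0$, multiplying by $z$ and differentiating yields the separable equation $\lambda z(1-z)f'(z) = (1-\lambda(1-z))f(z)$, whose general solution is $f(z) = C z^{(1-\lambda)/\lambda}(1-z)^{-1/\lambda}$. Holomorphy at the origin forces $(1-\lambda)/\lambda\in\N_0$, so $\lambda = 1/m$ and the eigenfunction is $f_m(z) = z^{m-1}(1-z)^{-m}$. The asymptotic $(1-|z|)^{\g}|f_m(z)| \asymp (1-|z|)^{\g-m}$ near $z=1$ (which is exactly where the norm is realized) shows that $f_m\in A^{-\g}$ exactly when $m\leq\g$, and $f_m\in A_0^{-\g}$ exactly when $m<\g$; this yields (i) and (iv).

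The bridge between the disk in (ii) and the eigenfunction exponents is the elementary identity $|\lambda - 1/(2\g)| < 1/(2\g) \iff \RR(1/\lambda) > \g$. For such $\lambda$ outside $\sigma_{pt}$, I would solve the inhomogeneous version of the ODE for $(\lambda I - \sC_{\g,0})f = g$, producing an explicit integral representation of any preimage $f$ in terms of $g$, with kernel built from $z^{(1-\lambda)/\lambda}(1-z)^{-1/\lambda}$ and its reciprocal. Using $\RR(1/\lambda)>\g$, the endpoint singularity at $z=1$ produces a single obstruction: $f \in A_0^{-\g}$ if and only if a bounded linear functional $\Lambda_\lambda$ on $A_0^{-\g}$ (a weighted integral against $g$) vanishes. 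Testing $\Lambda_\lambda$ on a monomial shows it is nonzero, so $\op{Im}(\lambda I - \sC_{\g,0}) = \Ker\Lambda_\lambda$ is closed of codimension one, establishing (iii). The inclusion of the whole closed disk in $\sigma(\sC_{\g,0})$ follows by closedness of the spectrum, and for $\lambda$ outside the closed disk the very same integral formula defines a bounded two-sided inverse, since now $\RR(1/\lambda)<\g$ tames the endpoint growth at $z=1$; this completes (ii). Finally, (v) is automatic from $\sC_\g = (\sC_{\g,0})^{**}$ together with the general Banach-space identity $\sigma(T) = \sigma(T^{**})$.

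The main obstacle is the borderline analysis in (iii): the weight $(1-|z|)^\g$ and the exponent $1/\lambda$ sit at precisely the critical balance along the ray to $z=1$, so pinning down the \emph{unique} obstruction functional $\Lambda_\lambda$ (rather than a higher-dimensional obstruction, or none at all), verifying that it is bounded on $A_0^{-\g}$, and checking its non-triviality all demand careful asymptotic analysis of the explicit integral kernel. The same asymptotics must then be shown to reverse character as $\lambda$ crosses the circle $|\lambda - 1/(2\g)| = 1/(2\g)$, turning the ``no preimage'' obstruction into a ``bounded inverse''.
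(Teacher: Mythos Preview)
The paper does not prove Theorem~\ref{T.Pe-AP}; it is stated as a quotation of results of Aleman and Persson, with citations to \cite[Theorem 4.1]{Pe} and \cite[Theorems 4.1, 5.1 and Corollaries 2.1, 5.1]{AP}, and no argument is supplied. So there is no ``paper's own proof'' to compare against.

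Your outline is in fact the route taken in those cited references. Persson derives exactly the ODE you wrote and the eigenfunctions $f_m(z)=z^{m-1}(1-z)^{-m}$ (the present paper even invokes \cite[(2.4)--(2.5), p.\,1184]{Pe} in the proof of Lemma~\ref{lemmaergodic}(i)), and then constructs the resolvent by the explicit integral formula \cite[(2.7)]{Pe} whose kernel is built from $z^{(1-\lambda)/\lambda}(1-z)^{-1/\lambda}$, just as you describe. Your derivation of (v) from $\sC_\g=(\sC_{\g,0})^{**}$ and the general identity $\sigma(T)=\sigma(T^{**})$ is also the standard reduction.

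One imprecision to fix in your sketch of (ii): you assert that ``for $\lambda$ outside the closed disk the very same integral formula defines a bounded two-sided inverse''. When $\gamma>1$ the points $\lambda=1/m$ with $m\in\N$, $m<\gamma$, lie \emph{outside} the disk $\{|\lambda-1/(2\gamma)|\le 1/(2\gamma)\}$, and at those points the homogeneous solution $f_m$ is holomorphic, so $\lambda I-\sC_{\g,0}$ has nontrivial kernel and no inverse exists. You have already placed these points into the spectrum via (i), so the correct claim is that the integral formula yields a bounded inverse for $\lambda$ outside the closed disk \emph{and} not of the form $1/m$ with $m<\gamma$; that is precisely the content of (ii). With this correction, and modulo the careful endpoint asymptotics for (iii) that you yourself flag as the main technical burden, the plan is sound and matches the literature.
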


As an immediate consequence we have the following result.

\begin{corollary}
For  $\g > 0 $, neither of the operators $\sC_\g, \sC_{\g, 0}$ is weakly compact.
\end{corollary}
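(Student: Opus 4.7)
The plan is to derive a contradiction with the size of the spectrum $\sigma(\sC_{\g,0})$ given in Theorem~\ref{T.Pe-AP}(ii). The key facts are that $A_0^{-\g}$ is isomorphic to $c_0$ and $A^{-\g}$ is isomorphic to $\ell^\infty$ by Shields and Williams, \cite{SW}, and consequently both spaces possess the Dunford--Pettis property. It is classical that on any Banach space $X$ with the Dunford--Pettis property, the square $T^2$ of any weakly compact operator $T\colon X \to X$ is compact: given a bounded sequence $(x_n)$ in $X$, weak compactness of $T$ produces a subsequence with $T x_{n_k} \rightharpoonup y$, and applying $T$ to this weakly convergent sequence yields a norm convergent one by complete continuity, so $T^2 x_{n_k} \to T y$ in norm. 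Hence $T^2$ is compact.

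Suppose now, toward a contradiction, that $\sC_{\g,0}$ is weakly compact on $A_0^{-\g}$. Then $\sC_{\g,0}^2$ would be compact, and so its spectrum would be at most countable with $0$ as the only possible accumulation point. By the polynomial spectral mapping theorem,
\[
\sigma(\sC_{\g,0}^2) = \{\lambda^2 : \lambda \in \sigma(\sC_{\g,0})\},
\]
and by Theorem~\ref{T.Pe-AP}(ii) the set $\sigma(\sC_{\g,0})$ contains the non-degenerate closed disk $\{\lambda \in \C : |\lambda - \tfrac{1}{2\g}| \leq \tfrac{1}{2\g}\}$. The image of this disk under $\lambda \mapsto \lambda^2$ has non-empty interior, hence is uncountable, giving the desired contradiction.

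The same reasoning handles $\sC_\g$ on $A^{-\g} \cong \ell^\infty$, since $\ell^\infty$ also enjoys the Dunford--Pettis property and $\sigma(\sC_\g) = \sigma(\sC_{\g,0})$ by Theorem~\ref{T.Pe-AP}(v). Alternatively, since $\sC_\g = (\sC_{\g,0})^{**}$, one could invoke Gantmacher's theorem to conclude that $\sC_\g$ is weakly compact if and only if $\sC_{\g,0}$ is, so treating one case would automatically settle the other. No step presents a genuine obstacle, since Theorem~\ref{T.Pe-AP} already supplies the crucial spectral information and the remaining ingredients are classical results of Banach space theory.
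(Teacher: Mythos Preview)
Your proof is correct and follows essentially the same strategy as the paper: use the special Banach-space structure of $A_0^{-\gamma}\cong c_0$ to upgrade weak compactness to (some form of) compactness, and then contradict the uncountable spectrum given by Theorem~\ref{T.Pe-AP}; the case of $\sC_\gamma$ is handled by Gantmacher's theorem in both versions. The only minor difference is technical: the paper uses the sharper fact that every weakly compact operator on $c_0$ is already compact (since the dual $\ell^1$ has the Schur property), obtaining directly that $\sC_{\gamma,0}$ would be compact, whereas you go through the Dunford--Pettis property to conclude only that $\sC_{\gamma,0}^2$ would be compact and then invoke the spectral mapping theorem; your route is slightly more general (it would work for any space with DPP) but requires one extra step.
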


\begin{proof}
Fix $\g > 0$. As noted in Section 1 there exists a Banach space isomorphism $\Phi$ from $c_0$ onto
$A_0^{-\g}$. Suppose that $\sC_{\g,0}$ is weakly compact. Then $\wt{\sC} := \Phi^{-1} \sC_{\g, 0} \Phi $ is
weakly compact from $c_0$ into $c_0$ and hence, $\wt{\sC}$ is also compact, \cite[Ex.\,3.54(b), p.347]{ME}.
Accordingly, $\sC_{\g,0} = \Phi \, \wt{\sC} \Phi^{-1} $ is compact. But, this is impossible as $\si (\sC_{\g,0})$
is an uncountable set; see Theorem~\ref{T.Pe-AP}(ii). Hence, $\sC_{\g,0}$ cannot be  weakly  compact.

By Gantmacher's Theorem, \cite[Ch.\,VI, Sect.\,4, Theorem 8]{DSI}, also $\sC_\g = (\sC_{\g,0})^{**}$ cannot be weakly
compact.
\end{proof}

The continuity of $\sC_{\g}$ and $\sC_{\g,0}$ as established in \cite[Theorem 4.1]{AP} gives no quantitative estimate for
their operator  norm.
So, the following result is of some interest.

\begin{theorem}\label{precisenorms}
%\begin{enumerate}
%\item[(i)]
{\rm(i)} Let $\g \geq 1$. Then $\|\sC_{\g}^n\| = \|\sC_{\g,0}^n\| = 1$ for all $n \in \N  $.

%\item[(ii)]
{\rm(ii)} Let $0<\g < 1$. Then $\|\sC_{\g}^n\| = \|\sC_{\g,0}^n\| = 1/\g^n$ for all $n \in \N$.
%\end{enumerate}
\end{theorem}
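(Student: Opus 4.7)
The plan is to sandwich $\|\sC_\g^n\|$ and $\|\sC_{\g,0}^n\|$ between a direct upper bound for $\|\sC_\g\|$ obtained from the integral formula \eqref{eq.op-C} (and then raised via submultiplicativity) and the matching lower bound $\|T^n\|\ge r(T)^n$. Because $\sC_{\g,0}$ is the restriction of $\sC_\g$ to the closed subspace $A_0^{-\g}$ carrying the same norm, any upper bound for $\sC_\g$ automatically dominates $\|\sC_{\g,0}^n\|$; conversely, by Theorem~\ref{T.Pe-AP}(v) the two operators share the same spectrum and hence the same spectral radius, so a single lower bound takes care of both cases.

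For the upper bound I would parametrize $\zeta=tz$ with $t\in[0,1]$ in \eqref{eq.op-C} to obtain $\sC_\g(f)(z)=\int_0^1 f(tz)/(1-tz)\,dt$, then apply the pointwise estimates $|f(tz)|\le\|f\|_{-\g}(1-t|z|)^{-\g}$ and $|1-tz|\ge 1-t|z|$ together with the elementary evaluation
$$
\int_0^1 (1-t|z|)^{-\g-1}\,dt \;=\; \frac{(1-|z|)^{-\g}-1}{\g|z|},
$$
which reduces the task to checking the one-variable inequality $(1-|z|)^\g|\sC_\g(f)(z)|\le\|f\|_{-\g}\,\phi(|z|)$ where
$$
\phi(r)\;:=\;\frac{1-(1-r)^\g}{\g r}, \qquad r\in(0,1).
$$
The uniform bound on $\phi$ is the only nontrivial step, and it is where I expect the main difficulty to lie; fortunately it splits cleanly according to the convexity of $r\mapsto(1-r)^\g$. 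For $\g\ge 1$, Bernoulli's inequality $(1-r)^\g\ge 1-\g r$ gives $\phi(r)\le 1$; for $0<\g<1$, the elementary bound $(1-r)^\g\ge 1-r$, valid because $0\le 1-r\le 1$, gives $\phi(r)\le 1/\g$. Together with the trivial case $z=0$ (where $\sC_\g(f)(0)=f(0)$), this yields $\|\sC_\g\|\le 1$ for $\g\ge 1$ and $\|\sC_\g\|\le 1/\g$ for $0<\g<1$, and hence the corresponding bounds on $\|\sC_\g^n\|$ and $\|\sC_{\g,0}^n\|$ by submultiplicativity.

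For the matching lower bound, Theorem~\ref{T.Pe-AP}(ii) places the closed disc $\{\lambda:|\lambda-1/(2\g)|\le 1/(2\g)\}$ inside $\sigma(\sC_\g)=\sigma(\sC_{\g,0})$, whose rightmost point is $1/\g$; when $\g\ge 1$, Theorem~\ref{T.Pe-AP}(iv) additionally puts $1=1/1$ in $\sigma_{pt}(\sC_\g)$. Therefore $r(\sC_\g)=r(\sC_{\g,0})$ equals $1$ for $\g\ge 1$ and equals $1/\g$ for $0<\g<1$, so the general inequality $\|T^n\|\ge r(T)^n$ pins each $\|\sC_\g^n\|$ and $\|\sC_{\g,0}^n\|$ to the value asserted. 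The sharpness of the convexity step is transparent on the extremal function $f(z)=(1-z)^{-\g}\in A^{-\g}$, which satisfies $\|f\|_{-\g}=1$ and, evaluated along real $z=r\in(0,1)$, turns every inequality in the key estimate into an equality, confirming that the argument cannot be tightened further.
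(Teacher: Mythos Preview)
Your proof is correct and follows essentially the same route as the paper: the integral estimate via $\zeta=tz$ leading to the function $\phi(r)=(1-(1-r)^\g)/(\g r)$, the split into $\g\ge1$ (Bernoulli) versus $0<\g<1$ (the bound $(1-r)^\g\ge 1-r$), and the matching lower bound from the spectral radius via Theorem~\ref{T.Pe-AP}. The only cosmetic difference is that the paper transfers between $\sC_\g$ and $\sC_{\g,0}$ by invoking biduality ($\|\sC_{\g,0}^n\|=\|(\sC_{\g,0}^n)^{**}\|=\|\sC_\g^n\|$) rather than your restriction-plus-equal-spectra sandwich, but both arguments are equally short.
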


\begin{proof}

Let $\g >0$. Fix $f\in A^{-\gamma}$ with $\abs{\abs{f}}_{\gamma} \leq 1$. Then
 $\abs{f(\xi)} \leq (1-\abs{\xi})^{-\gamma} $ for $ \xi \in \mathbb{D}$.
 For $z\in\mathbb{D} \setminus \{0\}$,  it follows from \eqref{eq.op-C}  and the previous inequality for $|f|$ that
\begin{eqnarray*}
 \abs{\sC_\g (f) (z)} & = &  \frac{1}{\abs{z}} \abs{\int_0^1 \frac{f(tz)}{1-tz}z \: dt} \le
  \int_0^1 \frac{\abs{f(tz)}}{\abs{1-tz}} dt \leq \int_0^1 \frac{\abs{f(tz)}}{1-t\abs{z}} dt \\
& \leq & \int_0^1 \frac{dt}{(1-t\abs{z})^{\gamma + 1}} = \left[ \frac{1}{\gamma\abs{z}}(1-t\abs{z})^{-\gamma}\right]_{t=0}^{t=1} \\ & = & \frac{1}{\gamma\abs{z}} \left( \frac{1}{(1-\abs{z})^{\gamma}} - 1\right) = \frac{1}{(1-\abs{z})^{\gamma}}\frac{1-(1-\abs{z})^{\gamma}}{\gamma \abs{z}}.
\end{eqnarray*}
The non-negative function $\phi(s):= \frac{1-(1-s)^{\gamma}}{s}$ for $s \in (0,1]$ and $\phi(0) := \g$  is
 continuous as $\lim\limits_{s\rightarrow 0^{+}} \frac{1-(1-s)^{\gamma}}{s} = \gamma$.
 Set $M_{\gamma} := \underset{s\in[0,1]}{\op{sup}} \phi (s) $. Since $ |\sC_\g  (f)(0) |= |f(0)| \le 1 $,
 we have $1\leq \underset{z\in\mathbb{D}}{\op{sup}}(1-\abs{z})^{\gamma}\abs{\sC_\g (f)(z)} \leq \max \{1,\frac{M_{\gamma}}{\gamma}\}$. Hence,

\begin{equation}\label{eq.max}
1 \le \|\sC_\g\| \le \max \Big \{1, \frac {M_\g}{\g} \Big \} , \qquad \g > 0 .
\end{equation}

(i) Let now $\gamma \geq 1$. The claim is  that $\frac{M_{\gamma}}{\gamma} \leq 1$,  which then  implies that
$\abs{\abs{\sC_{\g}}} = 1$. To verify the claim it suffices to show that the function
$\psi(s) := (1-s)^{\gamma}-1+\gamma s$,  for $s\in [0,1]$, is non-negative. Observe that  $\psi(0) = 0$ and
 $\psi(1)=(\gamma - 1) \geq 0$. If $\gamma =1$, then  $\psi(s) =0 $ for all $ s\in[0,1]$ and we are done.
 In case $\gamma > 1$ we have  $\psi'(s) = -\gamma(1-s)^{\gamma - 1} + \gamma = \gamma\left[1-(1-s)^{\gamma - 1}\right] \ge  0$
 and so $\psi$ is increasing. In particular, $\psi (s) \ge \psi (0) = 0$ for $s \in [0,1]$, as required.

Clearly, $\|\sC_\g\| \le 1 $ implies that  $\|\sC_{\g}^n\| \le \|\sC_\g\|^n \leq 1$ for  $n \in \N$. On the other hand, Theorem \ref{T.Pe-AP}
shows that $1 \in \sigma(C_{\g})$ and so, by the spectral mapping theorem, $1 \in \sigma(C^n_{\g})$.
In particular, the spectral radius $r(C^n_{\g}) \geq 1$,  for  $n \in \N$, and so  $1 \leq r (\sC_\g^n) \le \|\sC_{\g}^n\|$ for
$n \in \N$. This completes the proof of (i) for $\sC_{\g}$. The statement for $\sC_{\g,0}$ now follows from
$\|\sC^n_{\g, 0}\| = \|(C^n_{\g, 0})^{**}\| = \|\sC_\g^n\|$ for $n \in \N $.

(ii) Let $0<\g < 1$. It follows from Theorem \ref{T.Pe-AP} (as $\sigma_{pt}(C_{\g,0})=\emptyset$ in this case),
that $\sigma(C_{\g,0}) = \left\{\lambda\in \C\colon \left|\lambda -\frac{1}{2\gamma}\right|\leq \frac{1}{2\gamma}\right\}$.
In particular, $[0, \frac1 \g] \subseteq \sigma(C_{\g,0})$. By the spectral mapping theorem $[0, \frac {1}{ \g^n}] \subseteq \sigma(C_{\g,0}^n)$
for each $n \in \N$, and so $1/\g^n \leq r(C_{\g,0}^n) \leq \|C_{\g,0}^n\| = \|C_{\g}^n\|$ for each $n \in \N$.

To establish the reverse inequality we show, because   of  $0<\g < 1$, that $M_{\g} \leq 1$. Indeed, for
 $0<s<1$ we have  $(1-s) < (1-s)^{\g}$, i.e., $1-(1-s)^{\g} < s$, and hence,  $0<\phi(s)<1$. Since $\phi(0)=\g <1$ and
 $\phi(1)=1$, it is clear that  $M_{\g} \leq 1$ and so $\frac{M_{\gamma}}{\gamma} \le \frac{1}{\g}$. It follows from \eqref{eq.max}
 that  $\|C_{\g}\| \leq  \frac{ 1}{ \g}$. Hence, $\|\sC_{\g}^n\| = \|\sC_{\g,0}^n\| \leq  \frac{ 1}{ \g^n}$ for all $n \in \N$, and the proof is complete.
\end{proof}

For a Banach space $X$,  denote by $\cL(X)$ the space of all bounded linear operators from $X$ into  itself.
Recall that an operator $T\in\cL(X)$ is  \textit{mean ergodic} if its sequence of Ces\`aro averages
\begin{equation}\label{eq.cesaro-aver}
T_{[n]}:=\frac{1}{n}\sum_{m=1}^n T^m, \quad n\in\N,
\end{equation}
 converges to some operator $P\in \cL(X)$ in the strong operator topology $\tau_s$, i.e., $\lim_{n\to\infty}T_{[n]}x=Px$ for each $x\in X$, \cite[Ch.VIII]{DSI}.
 It follows from \eqref{eq.cesaro-aver} that $\frac{T^n}{n}=T_{[n]}-\frac{n-1}{n}T_{[n-1]}$, for $n\geq 2$. Hence,  $\tau_s$-$\lim_{n\to\infty}\frac{T^n}{n}=0$ whenever $T$ is mean ergodic and, in particular, $\sup_{n} \frac{\|T^n\|}{n} < \infty$.
 According to \cite[VIII Corollary 5.2, p.662]{DSI}, when $T$ is mean ergodic one  has the direct decomposition
\begin{equation}\label{eq.decompo}
X=\Ker (I-T)\oplus \ov{\mbox{Im}(I-T) }  .
\end{equation}

A Banach space operator $T\in \cL(X)$ is called \textit{uniformly mean ergodic} if there exists $P\in \cL(X)$ such that $\lim_{n\to\infty}\|T_{[n]}-P\|=0$.
It is then  immediate that necessarily $\lim_{n\to\infty}\frac{\|T^n\|}{n}=0$. A  result of M.Lin, \cite{Li}, states that a Banach space operator $T\in \cL(X)$ satisfying $\lim_{n\to\infty}\frac{\|T^n\|}{n}=0$ is uniformly mean ergodic if and only if $\mbox{Im}(I-T)$ is a \textit{closed} subspace of $X$.

An operator $T\in \cL(X)$ is called \textit{power bounded} if $\sup_{n\in\N}\|T^n\|<\infty$. Since $T_{[n]} (I-T)= \frac 1 n (T- T^{n+1})$
for $n \in \N$, it follows that
\begin{equation}\label{eq.lim}
    \lim_{n \to \infty } T_{[n]} x = 0 , \qquad x \in \mbox{Im} (I-T),
\end{equation}
whenever $T$ is power bounded.

\begin{lemma}\label{lemmaergodic}
%\begin{itemize}
%\item[(i)]
{\rm(i)} Define $\vp \in H (\D)$ via $\vp (z):= 1 / (1-z)$ for $z \in \D $. The Ces\`aro operator $\sC: H(\D) \rightarrow H(\D)$ satisfies
$\Ker(I-\sC)= {\rm span} \{ \vp\}$ and
$$
{\rm Im}(I-\sC)= \{h \in H(\D) : h(0)=0 \} = \Ker (\delta_0) ,
$$
with $\delta_0 \in H (\D)^*$. In particular, ${\rm Im} (I-C)$ is closed in $H (\D)$.

%\item[(ii)]
{\rm(ii)} Let $X$ be any Banach space of analytic functions on $\D$  which contains the constant functions, is  continuously
included in $H(\D)$ and such that $\sC: X \rightarrow X$ is continuous.     If $\sC: X \rightarrow X$ is mean ergodic,
then  $ \vp \in X$ and $\Ker (I-C) = {\rm span} \{\vp\}$.
%\end{itemize}
\end{lemma}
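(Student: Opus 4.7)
For part~(i), I will treat the kernel and image separately. For the kernel, if $\sC(f)=f$, multiplying by $z$ and differentiating the identity $zf(z)=\int_0^z f(\zeta)/(1-\zeta)\,d\zeta$ gives the first-order linear ODE $(1-z)f'(z)=f(z)$ on $\D$, whose only holomorphic solutions are scalar multiples of $\vp(z)=1/(1-z)$. Hence $\Ker(I-\sC)={\rm span}\{\vp\}$.

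For the image in~(i), one inclusion is automatic: since $\sC(f)(0)=f(0)$, every element of ${\rm Im}(I-\sC)$ lies in $\Ker(\delta_0)$. For the converse, I exploit the fact that $\sC$ is a Fr\'echet space isomorphism of $H(\D)$ with explicit inverse $\sC^{-1}(g)(z)=(1-z)(g(z)+zg'(z))$, obtained by differentiating $z\sC(f)(z)=\int_0^z f(\zeta)/(1-\zeta)\,d\zeta$. Setting $g:=\sC(f)$, the equation $(I-\sC)f=h$ transforms into the ODE $z\,\frac{d}{dz}\bigl[(1-z)g(z)\bigr]=h(z)$. Since $h(0)=0$, the quotient $h(\zeta)/\zeta$ lies in $H(\D)$, and integration yields the explicit solution $g(z):=\frac{1}{1-z}\int_0^z h(\zeta)/\zeta\,d\zeta\in H(\D)$. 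Defining $f:=h+g$, a short computation using $(1-z)g'-g=\frac{d}{dz}[(1-z)g]$ gives $\sC^{-1}(g)=g+z\,\frac{d}{dz}[(1-z)g]=g+h=f$, so $\sC(f)=g$ and $(I-\sC)f=h$. Closedness of ${\rm Im}(I-\sC)=\Ker(\delta_0)$ is immediate from continuity of $\delta_0$ on $H(\D)$.

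For part~(ii), my plan is to apply the mean ergodic projection $P\in\cL(X)$ to the constant function $\mathbf{1}\in X$. Mean ergodicity of $\sC$ on $X$ gives $T_{[n]}\mathbf{1}\to P\mathbf{1}$ in the norm of $X$, hence in $H(\D)$ by the continuous inclusion. Standard mean ergodic theory identifies the range of $P$ with $\Ker(I-\sC_X)$, which is contained in $\Ker(I-\sC)={\rm span}\{\vp\}$ by part~(i); thus $P\mathbf{1}=c\vp$ for some $c\in\C$. I pin down $c$ by evaluating at $z=0$: since $\sC^m(\mathbf{1})(0)=\mathbf{1}(0)=1$ for every $m\in\N$, the averages satisfy $T_{[n]}\mathbf{1}(0)=1$, and convergence in $H(\D)$ (uniform on compacta) forces $(P\mathbf{1})(0)=1$. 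As $\vp(0)=1$, we get $c=1$ and hence $\vp=P\mathbf{1}\in X$. The equality $\Ker(I-\sC_X)={\rm span}\{\vp\}$ then follows from $\vp\in\Ker(I-\sC_X)$.

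The only technical point requiring genuine care is the preimage construction in~(i), where one must confirm that the candidate $f=h+g$ really satisfies $(I-\sC)f=h$; this reduces to checking reversibility of the ODE derivation and is routine. Part~(ii) is then essentially bookkeeping around the mean ergodic projection together with the continuity of point evaluations on $H(\D)$.
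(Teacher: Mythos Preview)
Your proof is correct. For part~(i) the paper simply cites Persson~\cite{Pe}; your self-contained ODE argument for the kernel and your explicit preimage construction for the image are exactly the sort of computations that reference carries out, so there is no substantive difference in method---you have just unpacked the citation.

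For part~(ii) the routes diverge. The paper argues by contradiction through the mean ergodic decomposition $X=\Ker(I-\sC_X)\oplus\overline{\mathrm{Im}(I-\sC_X)}$: if $\vp\notin X$ then $\Ker(I-\sC_X)=\{0\}$ by part~(i), whence $X=\overline{\mathrm{Im}(I-\sC_X)}\subseteq\{h:h(0)=0\}$ (using $\delta_0\in X^*$ to pass to the closure), contradicting $\mathbf{1}\in X$. Your approach is constructive rather than by contradiction: you apply the mean ergodic projection $P$ directly to $\mathbf{1}$, identify $P\mathbf{1}$ as a scalar multiple of $\vp$ via the inclusion $\Ker(I-\sC_X)\subseteq\Ker(I-\sC)$, and then pin down the scalar by tracking the value at $0$ through the limit. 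Both arguments are short and rely on the same ingredients (part~(i), the decomposition/projection, continuity of $\delta_0$), but yours has the minor advantage of exhibiting $\vp$ explicitly as $P\mathbf{1}$, which foreshadows the identification of the limiting projection $P_\g$ in Theorem~\ref{ergodic}(iii).
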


\begin{proof}
(i) This is proved  in \cite[Section 2]{Pe}; see (2.4) and (2.5) on page 1184 with $\lambda =1$.

(ii) Assume that $\sC: X \rightarrow X$ is continuous and mean ergodic. Let $\sC_X $ denote the restriction of
$\sC$ to $X$.   Then
 \begin{equation}\label{eq.im}
     X=\Ker(I-\sC_X) \oplus \ov{{\rm Im}(I- \sC_X)} .
 \end{equation}
 If $\vp  \notin X$, then $\Ker (I - \sC_X) = \{0\}$ by part (i). Since point evaluations on $\D$ belong to $X^*$ (see
 Section 1), it follows from \eqref{eq.im} that
 $X \subseteq \{h \in H(\D) : h(0)=0 \}$. This is a contradiction, since the constant function $\textbf{1}$ belongs to $X$.
Hence, $\vp \in X$. Since $\Ker (I- \sC_X) \subseteq \Ker (I- \sC)$, it follows from part (i) that $\Ker (I- \sC_X) =
\mbox{span} \{\vp\}$.
\end{proof}

We can now establish the main result of this section.

\begin{theorem}\label{ergodic}
%\begin{itemize}
%\item[(i)]
{\rm (i)}   Let $0<\g < 1$.
Both of the  operators  $\sC_{\g}$ and $\sC_{\g,0}$ fail to be power bounded and are not mean ergodic.
Moreover,
$$
\Ker(I- \sC_{\g}) = \Ker(I - \sC_{\g,0}) = \{ 0 \} ,
$$
and ${\rm Im}(I- \sC_{\g})$  {\rm(}resp.\ ${\rm Im}(I - \sC_{\g,0})${\rm)}
is a proper closed subspace of $A^{-\g}$ {\rm(}resp.\ of  $A_0^{-\g}${\rm)}.

%\item[(ii)]
{\rm (ii)}  Both of the operators  $\sC_{1}$ and $\sC_{1,0}$ are power bounded but not mean ergodic.
Moreover, ${\rm Im}(I- \sC_{1})$ {\rm(}resp.\ ${\rm Im}(I - \sC_{1,0})${\rm)} is not a closed subspace of
$A^{-\g}$ {\rm(}resp.\ of  $A_0^{-\g}${\rm)}.

%\item[(iii)]
{\rm (iii)} Let $\g > 1$. Both of the  operators $\sC_{\g}$ and $\sC_{\g,0}$ are power bounded and uniformly mean ergodic.
Moreover, ${\rm Im}(I- \sC_{\g})$ {\rm(}resp.\ ${\rm Im}(I - \sC_{\g,0})${\rm)} is a proper closed subspace of
$A^{-\g}$ {\rm(}resp.\ of $A_0^{-\g}${\rm)}. In addition,
\begin{equation}\label{eq.ho}
{\rm Im}(I- \sC_{\g}) = \{h \in A^{-\g} : h (0)= 0 \}    .
\end{equation}

Moreover, with  $\vp (z) := 1 / (1-z)$, for $z \in \D$, the linear projection operator $P_\g :A^{-\g}  \to A^{-\g}$ given by
$$
P_\g (f) := f (0) \vp, \qquad f \in A^{-\g} ,
$$
is continuous and satisfies $\lim_{n \to \infty } (\sC_{\g})_{[n]}= P_\g$ in the  operator norm.
%\end{itemize}
\end{theorem}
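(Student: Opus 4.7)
My plan is to split the proof according to the three regimes $0<\g<1$, $\g=1$, and $\g>1$. The main tools are Theorem~\ref{T.Pe-AP} for the spectral information, Theorem~\ref{precisenorms} for the precise norms, Lemma~\ref{lemmaergodic} for the explicit description of $\Ker(I-\sC)$ and ${\rm Im}(I-\sC)$ in $H(\D)$, together with Lin's theorem: for $T\in\cL(X)$ with $\|T^n\|/n\to 0$, uniform mean ergodicity is equivalent to ${\rm Im}(I-T)$ being closed.

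For part (i), $0<\g<1$, Theorem~\ref{precisenorms}(ii) gives $\|\sC_\g^n\|=1/\g^n\to\infty$, immediately ruling out power boundedness; since also $\|\sC_\g^n\|/n\to\infty$, mean ergodicity fails by the necessary condition recorded after \eqref{eq.cesaro-aver}. Theorem~\ref{T.Pe-AP}(i),(iv) show $\sigma_{pt}(\sC_\g)=\sigma_{pt}(\sC_{\g,0})=\es$, whence both kernels are trivial. Theorem~\ref{T.Pe-AP}(iii) applies at $\lambda=1$ precisely because ${\rm Re}(1)=1>\g$, giving ${\rm Im}(I-\sC_{\g,0})$ closed of codimension one in $A_0^{-\g}$. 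Since $\sC_\g=(\sC_{\g,0})^{**}$, the closed range theorem transfers closedness to ${\rm Im}(I-\sC_\g)$, and properness follows from the inclusion ${\rm Im}(I-\sC_\g)\subseteq \Ker\delta_0$.

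For part (ii), $\g=1$, Theorem~\ref{precisenorms}(i) yields power boundedness, and in particular $\|\sC_1^n\|/n\to 0$. A direct check along the real radius shows $\vp(z):=1/(1-z)$ lies in $A^{-1}$ but not in $A_0^{-1}$. For $\sC_{1,0}$, Theorem~\ref{T.Pe-AP}(i) gives $\Ker(I-\sC_{1,0})=\{0\}$, so by Lemma~\ref{lemmaergodic}(ii) mean ergodicity would force $\vp\in A_0^{-1}$, a contradiction. For $\sC_1$, assuming mean ergodicity, the decomposition \eqref{eq.decompo} combined with $\Ker(I-\sC_1)={\rm span}\{\vp\}$ supplies a continuous projection of the form $P_1 f=\alpha(f)\vp$ for some $\alpha\in(A^{-1})^*$. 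For any $f\in A_0^{-1}$ the Ces\`aro averages $T_{[n]}f$ stay in the closed subspace $A_0^{-1}$, so $P_1 f\in A_0^{-1}\cap {\rm span}\{\vp\}=\{0\}$; taking $f=\mathbf{1}\in A_0^{-1}$ and using $\sC_1^m(\mathbf{1})(0)=\mathbf{1}(0)=1$ gives the contradiction $1=T_{[n]}(\mathbf{1})(0)\to 0$. Non-closedness of ${\rm Im}(I-\sC_1)$ and ${\rm Im}(I-\sC_{1,0})$ is then immediate from Lin's theorem in contrapositive.

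For part (iii), $\g>1$, the norm is again $1$ and $(1-|z|)^\g/|1-z|\le (1-|z|)^{\g-1}\to 0$ shows $\vp\in A_0^{-\g}$; Theorem~\ref{T.Pe-AP} ($1\in\sigma_{pt}$) combined with Lemma~\ref{lemmaergodic}(i) yields $\Ker(I-\sC_\g)=\Ker(I-\sC_{\g,0})={\rm span}\{\vp\}$. The key step, and the main obstacle, is to prove \eqref{eq.ho} and its $A_0^{-\g}$ analogue. Given $h\in A^{-\g}$ with $h(0)=0$, I rewrite $(I-\sC)f=h$ as the first-order linear ODE
\[
\big((1-z)f(z)\big)'=(1-z)\big(h(z)/z+h'(z)\big);
\]
integrating and then doing an integration by parts in the $h'$ term (exploiting $h(0)=0$) produces the explicit candidate
\[
f(z)=h(z)+\vp(z)\int_0^z\frac{h(\zeta)}{\zeta}\,d\zeta,
\]
whose integrand extends holomorphically to $\D$. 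Showing $f\in A^{-\g}$ is the delicate point: parametrizing the integral radially and using $|h|\lesssim(1-|\cdot|)^{-\g}$ gives $\abs{\int_0^z h(\zeta)/\zeta\,d\zeta}\lesssim(1-|z|)^{1-\g}$ for $\g>1$, and the weight $(1-|z|)^\g$ combined with $|\vp(z)|\le 1/(1-|z|)$ exactly absorbs this into a bounded quantity; for $h\in A_0^{-\g}$, an $\ve$--truncation splitting of the radial path upgrades boundedness to decay. With ${\rm Im}(I-\sC_\g)=\Ker\delta_0$ now closed and proper, Lin's theorem gives uniform mean ergodicity, and the direct sum $A^{-\g}={\rm span}\{\vp\}\oplus\Ker\delta_0$ (valid since $\vp(0)=1$) pins down the limit projection as $P_\g f=f(0)\vp$, whose continuity is manifest from $|f(0)|\le C\|f\|_{-\g}$.
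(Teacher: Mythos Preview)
Your proof is correct and follows the same overall architecture as the paper: Theorem~\ref{precisenorms} for power boundedness, Theorem~\ref{T.Pe-AP} for spectral input, the explicit solution formula $f(z)=h(z)+\vp(z)\int_0^z h(\zeta)/\zeta\,d\zeta$ for \eqref{eq.ho}, and Lin's theorem for the equivalence with closedness of the range. The differences are tactical, and all go in the direction of making the argument more self-contained. In part~(ii) the paper deduces non--mean ergodicity of $\sC_1$ from that of $\sC_{1,0}$ by citing \cite[Proposition~2.2]{ABR-7}, whereas you give a direct argument exploiting the $\sC_1$-invariance of the closed subspace $A_0^{-1}$ together with $\vp\notin A_0^{-1}$ and the continuity of $\delta_0$. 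In part~(iii), to show $\vp(z)\int_0^z h(\zeta)/\zeta\,d\zeta\in A^{-\g}$ the paper first records $h(z)/z\in A^{-\g}$ and then invokes the Hardy--Littlewood theorem \cite[Theorem~5.5]{Du} to place the primitive in $A^{-(\g-1)}$; your radial estimate $\int_0^{|z|}(1-t)^{-\g}\,dt\lesssim (1-|z|)^{1-\g}$ is precisely the computation underlying that theorem (note that you still need the observation $h(z)/z\in A^{-\g}$, or at least boundedness near $0$, to make the integrand's $1/\zeta$ harmless---your remark that the integrand extends holomorphically covers this). For closedness of ${\rm Im}(I-\sC_{\g,0})$ when $\g>1$ the paper again cites \cite[Proposition~2.1]{ABR-7}, while your $\ve$-splitting proves directly the $A_0^{-\g}$ analogue of \eqref{eq.ho}. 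Finally, your identification of the limit projection via the direct sum $A^{-\g}={\rm span}\{\vp\}\oplus\Ker\delta_0$ is a little cleaner than the paper's explicit computation with $\mathbf{1}=\vp-z\vp$. The trade-off is the expected one: your route avoids external references at the cost of redoing a few standard estimates by hand.
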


\begin{proof}
Clearly the function  $\varphi \in A^{-\g}_0 \subseteq A^{-\g}$ if $\g > 1$. Moreover,
 $\varphi \in A^{-1} \setminus A_0^{-1}$ and $\varphi \notin A^{-\g}$ in case $0<\g<1$.

(i) For $0<\g < 1$ it is clear from Theorem \ref{precisenorms}(ii) that both $\sC_{\g}$ and $\sC_{\g,0}$ are not
 power bounded. Neither are they  mean ergodic  because $\sup_{n} \|\sC_{\g}^n\|/n = \sup_{n} \|\sC_{\g,0}^n\|/n = \infty$;
see the discussion after \eqref{eq.cesaro-aver}.
Since $\Ker(I-\sC) = \rm{span}\{\varphi \}$ in $H(\D)$,  by Lemma \ref{lemmaergodic}(i) and because
$\vp \notin A^{-\g} $,  we can conclude that  $\Ker(I- \sC_{\g}) = \Ker(I - \sC_{\g,0}) = \{ 0 \}$.

Since $|1-\frac{1}{2\g}| < \frac{1}{2\g}$, we can apply Theorem \ref{T.Pe-AP}(iii) to deduce that ${\rm Im}(I - \sC_{\g,0})$
is a proper closed subspace of $A_0^{-\g}$. Moreover, $(I- \sC_{\g}) \in \cL (A^{-\g})$ is the bidual operator  of $I - \sC_{\g,0}$
 and so it follows from \cite[Proposition 2.1]{ABR-7} that ${\rm Im}(I- \sC_{\g})$ is also closed in $A^{-\g}$.
It is a proper subspace of $A^{-\g}$ because every $h \in {\rm Im}(I- \sC_{\g})$ satisfies $\sC_ \g (h)(0)=0$; see \eqref{eq.op-C}.

(ii) Both $\sC_{1}$ and $\sC_{1,0}$ are power bounded by Theorem \ref{precisenorms}(i).
Since $\varphi \notin A_0^{-1}$, it follows from Lemma \ref{lemmaergodic}(ii), with $X= A_0^{-1}$, that $\sC_{1,0}$
is not mean ergodic in $A_0^{-1}$. By \cite[Proposition 2.2]{ABR-7},  $\sC_{1}$  also fails to be  mean ergodic.

Assume  that ${\rm Im}(I- \sC_{1})$ is closed in $A^{-1}$. Since $ \lim_{n \to \infty }\|\sC_{1}^n\|/n = 0$,
 it follows from Lin's theorem, \cite{Li}, that  $\sC_{1}$ is uniformly mean ergodic.
But, we just argued that $\sC_1$  is not even  mean ergodic; contradiction!
The argument for $\sC_{1,0}$ is the same.

(iii) Let $\g > 1$. By Theorem \ref{precisenorms}(i) both $\sC_{\g}$ and $\sC_{\g,0}$ are power bounded.
In order to verify \eqref{eq.ho}, it is clear that
${\rm Im}(I- \sC_{\g}) \subseteq \{ h \in A^{-\g} \ : \ h(0)=0 \}$.

For the reverse inclusion,
fix $h \in A^{-\g}$ satisfying  $h(0)=0$. Theorem~\ref{T.Pe-AP} reveals that $1 \in \rho (\sC_\g)$.
By the calculations on p.1184 of \cite{Pe}, with $\lambda = 1 $,
 there is a unique $f \in H(\D)$ satisfying  $f(0)=0$ and $(I-\sC)f=h$. In fact, $f$ is given by \cite[formula (2.7)]{Pe}:
$$
f(z) = h(z) + \frac{1}{1-z} \int_0^z \frac{h(\zeta)}{\zeta}, \qquad z \in \D.
$$
In order to conclude that  $f \in A^{-\g}$, we first show that the function $H(z):=h(z)/z$, for $ z \in \D \setminus \{0\}$
and $H(0): =h'(0)$, belongs to  $ A^{-\g}$. Recalling  that $h (0) = 0 $, it is clear
that $H \in H (\D)$. If $\frac 1 2 \le |z| < 1 $, then $|H(z)| \leq 2 |h(z)|$.
Hence,  $(1-|z|)^{\g} |H(z)| \leq 2 \|h \|_{-\g}$. On the other hand, for  $|z| \leq \frac 1 2 $ we can apply
the maximum modulus principle to $H \in H(\D)$ to conclude that
$$
|H(z)| \leq \max_{|\zeta|=1/2} |H(\zeta)| = 2 \max_{|\zeta|=1/2} |h(\zeta)| = 2^{\g+1} \max_{|\zeta|=1/2}
(1- |\zeta|)^\g |h  (\zeta) | \le   2^{\g+1} \| h \|_{-\g}.
$$
Summarizing, $\|H\|_{- \g}= \sup_{z \in \D} (1-|z|)^{\g} |H(z)| \leq 2^{\g+1} \| h \|_{-\g}$, i.e.,  $H \in A^{-\g}$.

Consider  $G(z):= \int_0^z H(\zeta )\; d \zeta $, for $ z \in \D$. Clearly $G'(z)=H(z)$ for  $z \in \D$, with
 $H \in A^{-\g}$. By a classical result of Hardy and Littlewood (consider $p= \infty $ and $\beta := \g$ in
  \cite[Theorem 5.5]{Du}), the function  $G \in A^{-(\g-1)} = A^{-\g+1}$ (recall $(\g -1) >0)$). Therefore, for each $z \in \D$,
we have for $\vp G $   that
\begin{eqnarray*}
&& (1- |z|)^{\g} |\vp (z) G (z)| = (1-|z|)^{\g} \Big|\frac{1}{1-z} \int_0^z  H (\zeta) \; d \zeta \Big| \\
&&  \leq (1-|z|)^{\g-1}  \Big|\int_0^z  H (\zeta ) \; d \zeta\Big| \leq \| G \|_{-\g+1},
\end{eqnarray*}
that is, $\vp G \in A^{-\g}$. Hence, also $f = h + \vp G \in A^{-\g} $. This establishes that
 ${\rm Im}(I- \sC_{\g}))$ is closed in $A^{-\g}$. Since $\sC_{\g}$ is power bounded, a theorem of Lin \cite{Li}
 implies that   $\sC_{\g}$ is uniformly mean ergodic.

By \cite[Proposition 2.1]{ABR-7} it follows that also ${\rm Im}(I- \sC_{\g,0}))$ is closed in $A_0^{-\g}$. So,
$\sC_{\g,0}$ is uniformly mean ergodic, again as a consequence of Lin's theorem.

 Lemma \ref{lemmaergodic}(ii), with $X = A^{-\g}$, and the mean ergodicity of $\sC_\g$  yield that $\vp \in A^{-\g}$.
Since $\delta_0 \in (A^{-\g})^*$,     the formula
$$
P_\g (f) := f (0) \vp = \langle f, \delta_0\rangle \vp, \qquad f \in A^{-\g} ,
$$
implies that $P_\g $ is a continuous projection in $A^{-\g}$.

For each $ f \in A^{-\g}$ we have
\begin{equation}\label{eq.cg}
(\sC_{\g})_{[n]} (f) = (\sC_{\g})_{[n]} (f - f(0)) + f(0) (\sC_{\g})_{[n]} (\textbf{1})    , \qquad n \in \N .
\end{equation}
On the other hand, $\vp \in A^{-\g} $ satisfies $(\sC_{\g})_{[n]} (\vp) = \vp $ for all $n \in \N$; see Lemma \ref{lemmaergodic}(ii)
with $X = A^{-\g}$. According to \eqref{eq.ho} the function $h (z) := z \vp (z)$ for $z \in \D$ belongs to
${\rm Im}(I- \sC_{\g})$ and hence, the sequence
$$
(\sC_{\g})_{[n]} (\textbf{1}) = (\sC_{\g})_{[n]}  (\vp - h ) = \vp - (\sC_{\g})_{[n]} (h) , \qquad n \in \N ,
$$
converges to $\vp$ in $A^{-\g}$ for $n \to \infty $; see \eqref{eq.lim}. It then follows from \eqref{eq.cg} and
the fact that $\lim_{n \rightarrow \infty} (\sC_{\g})_{[n]} (f - f(0)) = 0 $ in $A^{-\g}$ via \eqref{eq.lim} (since
$(f-f(0)) \in  {\rm Im}(I- \sC_{\g})$ by \eqref{eq.ho}) that
$$
\lim_{n \to \infty } (\sC_{\g})_{[n]} (f) = f (0) \vp = P_\g (f) .
$$
This establishes that $\tau_s \mbox{-} \lim_{n \to \infty }  (\sC_{\g})_{[n]} = P_\g $. But, the uniform mean ergodicity of
$\sC_{\g}$ means that $\{(\sC_{\g})_{[n]} \}_{n \in \N }$ is a convergent sequence for the operator norm and hence,
its operator norm limit must  also be $P_\g$.
\end{proof}

\begin{remark}\label{remergodic} \rm{
(i) Let $\g \ge 1 $. It follows from Theorem \ref{T.Pe-AP}  that the boundary $\partial \D$ of $\D$ satisfies
$ \si (\sC_{\g}) \cap \partial \D = \{1\} =  \si (\sC_{\g, 0}) \cap \partial \D $.
Since both $\sC_{\g}$ and $\sC_{\g,0}$ are power bounded, it follows from  Theorem 1 and the Remark on p.317 of \cite{K-T}  that
$$
\lim_{n \to \infty}  \left\|\sC_{\g}^{n +1} - \sC_{\g}^n \right\| = 0 =\lim_{n \to \infty}  \left\|\sC_{\g,0}^{n +1} -  \sC_{\g,0}^n \right\|.
$$
(ii) Let $\g > 1 $.  Theorem \ref{T.Pe-AP} shows that $\lambda = 1$ is an isolated singularity of the resolvent map of both
$\sC_\g $ and $\sC_{\g,0}$. Since both $ \sC_{\g}$ and $ \sC_{\g,0}$ are uniformly mean ergodic, it follows that $1$ is actually a
simple pole of the resolvent  map, \cite[Theorem 2.7, p.90]{K}.

(iii) For $\g \ge 1$ the point $1 \in \si (\sC_\g) = \si (\sC_{\g,0})$ and so the spectral mapping theorem applied to the
polynomial $p_n (z) := \frac 1 n \sum^n_{m=1} z^m$ (i.e., $(\sC_\g)_{[n]} = p_n (\sC_\g)$) yields that
$1 \in \si ( (\sC_\g)_{[n]})$, for $n \in \N $. Hence,
$$
1 \le r \left( (\sC_\g)_{[n]}\right) \le \| (\sC_\g)_{[n]}\| , \qquad n \in \N .
$$
Then  Theorem \ref{precisenorms}(i) and the formula $ (\sC_\g)_{[n]} = \frac 1 n \sum^n_{m=1} \sC_\g^m$  imply that
$$
\left\| (\sC_\g)_{[n]}\right\| = \left\| (\sC_{\g,0})_{[n]}\right\| = 1 , \qquad n \in \N  .
$$

For $0 < \g < 1 $ the point $\frac 1 \g \in \si  (\sC_\g)$ and so again  the spectral mapping theorem yields that $ p_n (\frac 1 \g ) =
\frac 1 n \sum^n_{m=1}   \frac{1}{\g^m} \in \si ( (\sC_\g)_{[n]})$ for $n \in \N$. In particular,
$$
\frac 1 n \sum^n_{m=1} \frac{1}{\g^m} \le r \left( (\sC_\g)_{[n]}\right) = r\left ( (\sC_{\g,0})_{[n]}\right), \qquad n \in \N .
$$
It then follows from Theorem~\ref{precisenorms}(ii) that
$$
\left\| (\sC_{\g,0})_{[n]}\right\| =\left\| (\sC_\g)_{[n]}\right\| = \frac 1 n \sum^n_{m=1} \frac{1}{\g^m} =
\frac{(\frac{1}{\g^n})-1}{(n+1)(1-\g)} , \qquad n \in \N .
$$
In particular, for $0<\g < 1 $, not only do $\sC_\g $ and $\sC_{\g, 0}$ fail to be power bounded but, their Cesàro averages
$\{(\sC_\g)_{[n]}\}^\infty_{n=1}$ and $\{(\sC_{\g, 0})_{[n]}\}^\infty_{n=1}$ are also unbounded sequences in $\cL (A^{-\g})$ and
$\cL (A_0^{-\g})$, respectively.
\Bo
}
\end{remark}

Concerning the dynamics of $\sC$ recall that an operator $T\in \cL(X)$, with  $X$ a separable
Fréchet  space, is called \textit{hypercyclic} if there exists $x\in X$ such that the orbit $\{T^nx\colon n\in\N_0\}$
is dense in $X$. If, for some $z\in X$, the projective orbit $\{\lambda T^n z\colon \lambda\in\C,\ n\in\N_0 \}$ is dense
in $X$, then $T$ is called \textit{supercyclic}. Clearly, hypercyclicity  implies supercyclicity.
It is proved in \cite{ABR_power} that $\sC$ is not supercyclic on $H(\D)$. Since the image of a dense subset of
$A_0^{-\g}$ under the natural inclusion map into $H (\D)$ is dense in $H (\D)$, we  have the following  consequence.

\begin{prop}\label{hyper}
The Cesàro operator $\sC_{\g, 0 }$ is not supercyclic and  hence, also  not hypercyclic,  in each space $A_0^{-\gamma}$,  for  $ \g > 0$.
\end{prop}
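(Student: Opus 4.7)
The plan is to use a standard transference-of-cyclicity argument, exploiting that the Ces\`aro operator on $A_0^{-\g}$ is the restriction of the Ces\`aro operator on $H(\D)$ (already known \emph{not} to be supercyclic by \cite{ABR_power}), together with the dense embedding of $A_0^{-\g}$ into $H(\D)$.

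First, I would record three facts already available in the paper. The inclusion $j\colon A_0^{-\g}\hookrightarrow H(\D)$ is continuous by the very definition of a Banach space of analytic functions on $\D$. Moreover, the polynomials lie in $A_0^{-\g}$ and, by \cite[Lemma 3]{SW}, are dense there; since the polynomials are evidently dense in $H(\D)$, the range $j(A_0^{-\g})$ is dense in $H(\D)$. Finally, the defining formula \eqref{eq.op-C} shows that the operator $\sC$ on $H(\D)$ restricts to $\sC_{\g,0}$ on $A_0^{-\g}$, so that the intertwining identity $\sC\circ j=j\circ \sC_{\g,0}$ holds, and hence $\sC^n\circ j=j\circ \sC_{\g,0}^{\,n}$ for every $n\in\N_0$.

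Next I would argue by contradiction. Suppose $\sC_{\g,0}$ is supercyclic on $A_0^{-\g}$, with a supercyclic vector $z\in A_0^{-\g}$. Then the projective orbit $\{\lambda \sC_{\g,0}^{\,n}z:\lambda\in\C,\ n\in\N_0\}$ is dense in $A_0^{-\g}$. Applying the continuous map $j$ and using the intertwining identity, the set
\[
\{\lambda\, \sC^n(j(z)):\lambda\in\C,\ n\in\N_0\}=j\bigl(\{\lambda \sC_{\g,0}^{\,n}z:\lambda\in\C,\ n\in\N_0\}\bigr)
\]
is dense in $j(A_0^{-\g})$. Combined with the density of $j(A_0^{-\g})$ in $H(\D)$, it follows that the projective orbit of $j(z)\in H(\D)$ under $\sC$ is dense in $H(\D)$, i.e.\ $\sC$ is supercyclic on $H(\D)$, contradicting \cite{ABR_power}. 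Hence $\sC_{\g,0}$ is not supercyclic; since every hypercyclic operator is supercyclic, it is also not hypercyclic.

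There is really no serious obstacle: the only point deserving a sentence is the dense-range claim for $j$, and that has already been prepared in the introduction and in Section~1. The proof should thus be a short paragraph carrying out the transference argument and invoking \cite{ABR_power} for the base case on $H(\D)$.
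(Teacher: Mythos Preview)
Your proposal is correct and follows exactly the paper's approach: the paper records (in the sentence immediately preceding the proposition) that $\sC$ is not supercyclic on $H(\D)$ by \cite{ABR_power} and that the natural inclusion $A_0^{-\g}\hookrightarrow H(\D)$ has dense range, which is precisely the transference argument you have spelled out in detail. The only difference is level of explicitness; the underlying idea is identical.
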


\section{Optimal extension of $\sC_\g$ and  of   $\sC_{\g, 0}$}\label{optimal}

The \textit{optimal domain} $[\sC, H^p (\D)]$ of the Ces\`aro operator $C: H^p (\D) \to H^p (\D)$, $ 1 \le p < \infty $, was
introduced and thoroughly investigated in \cite[Section 3]{CR}. The definition  given there is rather general and can be applied to $\sC$
when it acts in any Banach space of analytic functions $X$ on $\D$. Namely,
$$
[\sC, X] := \{ f \in H(\D) \ : \ \sC(f) \in X \},
$$
which is a Banach space for the norm
\begin{equation}\label{eq.norm1}
  \|f\|_{[\sC , X]} := \|\sC (f)\|_X, \qquad f \in [\sC, X] ,
\end{equation}
as of a consequence of $\sC: H (\D) \to H (\D)$ being a topological Fréchet space isomorphism. When point
evaluations on $\D$ belong to $[\sC, X]^*$, then $[\sC, X] $ is actually a Banach space of analytic functions on $\D$
and $\sC$ maps $[\sC, X]$ isometrically onto $X$. If,  in addition, $\sC $ acts in $X$, then $X \su [\sC, X]$ and the
natural inclusion map is continuous. Most important is that $[\sC, X]$ is the \textit{largest} of all Banach spaces of
analytic functions $Y$ on $\D$ that $\sC$ maps continuously into $X$; the argument is analogous to \cite[Remark 3.1]{CR}.
To describe more concretely which functions from $ H (\D)$ are members of $[\sC, X]$ may not be easy in general:
for $X= H^p (\D)$ this is based on properties of the Littlewood-Paley $g$-function \cite[Proposition 3.2 \& Corollary 3.3 ]{CR}.
To express the norm \eqref{eq.norm1} in a more explicit way, if possible, would also be an advantage.

The aim of this section it to investigate the optimal domain spaces $[\sC, A^{-\g}]$ and $ [\sC, A_0^{-\g}]$ for $\g > 0$. Clearly
$[\sC, A_0^{-\g}] \su [\sC, A^{-\g}]$. Moreover, with continuous inclusions, we clearly have
$$
A^{-\g}\su [\sC, A^{-\g}]  \subsetneqq H (\D)  \quad \mbox{and} \quad  A_0^{-\g} \su [\sC, A_0^{-\g}] \subsetneqq H (\D) ,
$$
due to the closed graph theorem and the fact that point evaluations on $\D$ belong to $[\sC, A^{-\g}]^*$ hence, also to
$[\sC, A_0^{-\g}]^* $.
To verify this latter claim, fix $z_0 \in \D$ and choose $0<r<1$ with $|z_0| < r$. For $f \in  H(\D)$
observe that  $f(z) = (1-z)(z \sC(f)(z))'$, for $ z \in \D,$ which is routine to verify. Given $f \in [\sC , A^{-\g}]$,
the previous identity  and   the Cauchy integral formula  yield
$$
|\la f, \delta_{z_0} \ra| = |f(z_0)| = |1-z_0| \cdot \Big|\frac{1}{2 \pi i} \int_{|\zeta|=r} \frac{\zeta \sC(f)(\zeta) }{(\zeta-z_0)^2} d \zeta \Big| \leq
$$
$$
\frac{|1-z_0|r^2}{(r-|z_0|)^2(1-r)^{\g}} \sup_{|\zeta|=r}(1-|\zeta|)^{\g} |\sC(f)(\zeta)| \leq
\frac{|1-z_0|r^2}{(r-|z_0|)^2(1-r)^{\g}} \|\sC(f) \|_{-\g}.
$$
Accordingly, $|f(z_0)| \leq \frac{|1-z_0|r^2}{(r-|z_0|)^2(1-r)^{\g}} \| f \|_{[\sC, A^{-\g}]}$ and so  each  evaluation functional
$\delta_z : f \rightarrow f(z)$ is continuous on both $[\sC, A^{-\g}]$ and $[\sC, A_0^{-\g}]$, for each $z \in \D$.

\begin{theorem} \label{largerextbig}
Let $\g > 0$ and $\vp (z) := 1 / (1-z)$ for $z \in \D$. The optimal domain $[\sC, A^{-\g}]$ of $\sC_{\g}: A^{-\g} \rightarrow A^{-\g}$
is isometrically isomorphic to $A^{-\g}$ and is given by
\begin{equation}\label{eq.hd}
[\sC, A^{-\g}]  =  \{ f \in H(\D) \ : \ f  \varphi \in A^{-(\g+1)} \}  .
\end{equation}
Moreover, the norm $\| . \|_{[\sC, A^{-\g}]}$ is equivalent to the norm $f \rightarrow \|f \varphi \|_{-(\g+1)}$
and the containment $A^{-\g} \su [\sC, A^{-\g}]$ is proper.
\end{theorem}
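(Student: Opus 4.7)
The plan is to realise $[\sC,A^{-\g}]$ as the isometric $\sC$-preimage of $A^{-\g}$ and then to convert that abstract description into the explicit growth condition in \eqref{eq.hd} via the differential identity already noted in the preamble to the theorem, namely $f(z)=(1-z)(z\sC(f)(z))'$ for $z\in\D$. The isometric isomorphism is essentially built into \eqref{eq.norm1}: $\sC\colon[\sC,A^{-\g}]\to A^{-\g}$ is an isometry by definition of the norm, and if $g\in A^{-\g}\subseteq H(\D)$ then $f:=\sC^{-1}(g)\in H(\D)$ satisfies $\sC(f)=g\in A^{-\g}$, which places $f$ in $[\sC,A^{-\g}]$ with $\|f\|_{[\sC,A^{-\g}]}=\|g\|_{-\g}$.

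Setting $g:=\sC(f)$, the identity rewrites as $f\vp=(zg)'$, so the proof of \eqref{eq.hd} reduces to the two-sided equivalence $\|\sC(f)\|_{-\g}\approx \|f\vp\|_{-(\g+1)}$. For the forward direction, $g\in A^{-\g}$ implies $zg\in A^{-\g}$ with the same norm, and applying the Cauchy integral formula on the circle of radius $(1-|z|)/2$ centred at $z$ yields the standard derivative estimate $|(zg)'(z)|\le 2^{\g+1}\|g\|_{-\g}/(1-|z|)^{\g+1}$, hence $f\vp\in A^{-(\g+1)}$ with norm controlled by a constant times $\|\sC(f)\|_{-\g}$. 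For the reverse direction, set $F:=f\vp$ and $G(z):=\int_0^z F(\zeta)\,d\zeta$; since $\g+1>1$ the Hardy--Littlewood theorem \cite[Theorem 5.5]{Du} (the same tool used in the proof of Theorem~\ref{ergodic}(iii)) places $G\in A^{-\g}$ with $\|G\|_{-\g}$ controlled by $\|F\|_{-(\g+1)}$. Because $G(0)=0$ the function $\sC(f)(z)=G(z)/z$ is holomorphic on $\D$, is dominated by $2\|G\|_{-\g}/(1-|z|)^\g$ on $\{\tfrac12\le|z|<1\}$, and the maximum modulus principle on $\{|z|\le\tfrac12\}$ (mimicking Theorem~\ref{ergodic}(iii)) supplies the same growth bound there.

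Together these inequalities yield \eqref{eq.hd} and the equivalence of norms. The inclusion $A^{-\g}\subseteq[\sC,A^{-\g}]$ is immediate because $\sC$ acts continuously on $A^{-\g}$ by \cite[Theorem 4.1]{AP}. For \emph{properness} the slickest route is spectral: if the two sets coincided, the closed graph theorem would render $\|\cdot\|_{-\g}$ equivalent to $\|\sC(\cdot)\|_{-\g}$ on $A^{-\g}$, and combined with the surjectivity $\sC(A^{-\g})=A^{-\g}$ (which is the content of the hypothetical set equality) this would make $\sC_\g$ a Banach space isomorphism of $A^{-\g}$, contradicting $0\in\si(\sC_\g)$ from Theorem~\ref{T.Pe-AP}(ii),(v). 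A concrete witness is also at hand: using $|1-z^2|\ge 1-|z|^2$ one checks that $f(z):=1/(1+z)^{\g+1}$ satisfies $\|f\vp\|_{-(\g+1)}\le 2^\g$, while $(1-r)^\g|f(-r)|=1/(1-r)\to\infty$, so $f\in [\sC,A^{-\g}]\setminus A^{-\g}$.

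The main technical obstacle is the reverse half of the norm equivalence, where one must upgrade the Hardy--Littlewood growth estimate for the antiderivative $G$ to a uniform bound for $G(z)/z$ across \emph{all} of $\D$, including a neighbourhood of the origin; the forward half is a one-line Cauchy estimate, and the concrete witness for proper containment is a short computation once \eqref{eq.hd} is in place.
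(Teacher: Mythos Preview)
Your proof is correct and takes essentially the same route as the paper: both reduce \eqref{eq.hd} to the Hardy--Littlewood equivalence $G\in A^{-\g}\iff G'\in A^{-(\g+1)}$ via the identity $f\vp=(z\,\sC(f))'$ together with the $h(z)/z$ argument from the proof of Theorem~\ref{ergodic}(iii). The only differences are cosmetic: you track constants explicitly in both directions (so the norm equivalence drops out directly, whereas the paper establishes only one inequality and then invokes the open mapping theorem), and for proper containment you add a spectral argument ($0\in\si(\sC_\g)$) alongside a concrete witness---the paper gives only the witness $g(z)=(1-z)/(1+z)^{\g+1}$ here, reserving the spectral trick for Theorem~\ref{largerextsmall}.
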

\begin{proof}
That $[\sC, A^{-\g}]$ is isometrically isomorphic to $A^{-\g}$ follows from the facts that
 $\sC: [\sC, A^{-\g}] \rightarrow A^{-\g}$ is both injective (as $\sC$ is bijective on $H(\D)$)
and surjective (by the definition of $[\sC, A^{-\g}]$), and that it is an isometry (via \eqref{eq.norm1}).

To verify \eqref{eq.hd}  observe,  for $f \in H(\D)$, that   $\sC(f) \in A^{-\g}$ if and only if the function
$z \mapsto \int_0^z f(\zeta)/(1-\zeta) d \zeta \in A^{-\g}$. This follows from  the argument in the proof of Theorem \ref{ergodic}(iii)
above, where it is shown that if $h \in A^{-\g}$ satisfies $h (0) =0$, then also $H (z) :=  \frac{h (z)}{z} \in A^{-\g}$ (the
converse is clear from $h (z) = z H (z)$). Note that this part of the argument in the proof of Theorem \ref{ergodic}(iii) did
not require $\g > 1 $ (which was being assumed there).
By a result of Hardy and Littlewood, \cite[Theorem 5.5]{Du}, this in turn is equivalent to $f \vp \in A^{-(\g+1)}$.

To verify the equivalence of the norms $\| . \|_{[\sC, A^{-\g}]}$ and $f \rightarrow \|f \varphi \|_{-(\g+1)}$ we proceed as follows.
 First,   the space $E:=\{ f \in H(\D) \ : \ f \varphi \in A^{-(\g+1)} \}$ endowed with the norm $\|f \varphi \|_{-(\g+1)}$,
 for $  f \in E, $ is a Banach space which is  isomorphic to $A^{-(\g+1)}$. On the other hand, since $\sC : [\sC, A^{-\g}] \rightarrow A^{-\g}$
is continuous  we have, for some constant $K > 0$, that
$$
\|f \varphi\|_{-(\g+1)} \leq \| f \|_{-\g} \leq K \|f\|_{[\sC, A^{-\g}]}, \qquad f \in [\sC, A^{-\g}].
$$
Therefore the identity map $I: [\sC, A^{-\g}] \rightarrow E$ is a continuous bijection between Banach spaces.
By the open mapping theorem its inverse is also continuous. This implies that the two norms are equivalent in $[\sC, A^{-\g}]$.

Finally, the function $g(z):=(1-z)/(1+z)^{\g+1}$ for $z \in \D$  satisfies $ \| g \vp \|_ {-(\g+1)} \le 1 $. i.e., $g \in  [\sC , A^{-\g}]$.  However,
$g \notin A^{-\g}$ since
$$
\sup_{z \in \D} \frac{(1-|z|)^{\g}|1-z|}{|1+z|^{\g+1}} \geq \sup_{s \in [0,1]} \frac{1+s}{1-s} = \infty.
$$
This shows that the containment $A^{-\g} \su [\sC, A^{-\g}]$ is proper.
\end{proof}

\begin{theorem} \label{largerextsmall}
Let $\g > 0$  and $\vp (z) := 1 / (1-z)$ for $z \in \D$.  The optimal domain $[\sC, A_0^{-\g}]$
of $\sC_{\g, 0}: A_0^{-\g} \rightarrow A_0^{-\g}$
is isometrically isomorphic to $A_0^{-\g}$ and is given by
\begin{equation}\label{eq.3.3}
[\sC, A^{-\g}_0]  =  \big  \{ f \in H(\D) \ : \ f \varphi \in A_0^{-(\g+1)} \big\} .
\end{equation}
 Moreover, the norm $\|\cdot \|_{[\sC, A_0^{-\g}]}$ is equivalent to the norm $f \mapsto \|f \vp\|_{-(\g+1)}$ and the
 containment    $A_0^{-\g} \su [\sC, A_0^{-\g}]$    is proper.
\end{theorem}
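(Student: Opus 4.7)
The plan is to adapt the four-step strategy of Theorem \ref{largerextbig} to the little-o ($A_0$) setting. The isometric isomorphism $[\sC,A_0^{-\g}]\cong A_0^{-\g}$ is immediate: $\sC:H(\D)\to H(\D)$ is a bijection, it maps $[\sC,A_0^{-\g}]$ onto $A_0^{-\g}$ by the definition of the optimal domain, and \eqref{eq.norm1} makes this an isometry.

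For the characterization \eqref{eq.3.3}, I would set $F(z):=\int_0^z f(\zeta)/(1-\zeta)\,d\zeta$, so that $F(z)=z\sC(f)(z)$ and $F'=f\vp$. Since $|z|$ is bounded away from $0$ near the boundary (and a maximum modulus estimate on $|z|\le 1/2$ controls $F/z$ and $z\sC(f)$ globally, exactly as in the proof of Theorem \ref{ergodic}(iii)), the relation $F=z\sC(f)$ yields $\sC(f)\in A_0^{-\g}$ iff $F\in A_0^{-\g}$. What then remains is the $A_0$-analog of the Hardy--Littlewood equivalence invoked in Theorem \ref{largerextbig}: $F\in A_0^{-\g}$ iff $F'\in A_0^{-(\g+1)}$. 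For the forward direction, Cauchy's formula on the circle $|\zeta-z|=(1-|z|)/2$ yields $(1-|z|)^{\g+1}|F'(z)|\le 2^{\g+1}\sup_{|\zeta-z|=(1-|z|)/2}(1-|\zeta|)^\g|F(\zeta)|$, which tends to $0$ as $|z|\to 1^-$ since $F\in A_0^{-\g}$ and all such $|\zeta|$ tend to $1$. For the converse, integrate $F'$ radially from $0$ to $z$ and split at a cutoff $R$ with $(1-|w|)^{\g+1}|F'(w)|<\de$ for $|w|>R$; a direct calculation gives $\limsup_{|z|\to 1^-}(1-|z|)^\g|F(z)|\le\de/\g$, which is arbitrarily small.

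The norm equivalence is then a routine application of the open mapping theorem, exactly as in Theorem \ref{largerextbig}: the space $E_0:=\{f\in H(\D):f\vp\in A_0^{-(\g+1)}\}$ endowed with $\|f\|_{E_0}:=\|f\vp\|_{-(\g+1)}$ is Banach and isomorphic to $A_0^{-(\g+1)}$ via $f\mapsto f\vp$; the quantitative form of the Hardy--Littlewood bound shows the identity map $[\sC,A_0^{-\g}]\to E_0$ is a continuous bijection between Banach spaces, hence an isomorphism.

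Finally, for the proper containment $A_0^{-\g}\subsetneqq[\sC,A_0^{-\g}]$, the witness $g(z)=(1-z)/(1+z)^{\g+1}$ from Theorem \ref{largerextbig} is inadequate here, since $g\vp=(1+z)^{-(\g+1)}$ satisfies $(1-|z|)^{\g+1}|g\vp(z)|=1$ at $z=-r$, precluding membership in $A_0^{-(\g+1)}$. Instead I would take $g(z):=(1+z)^{-\g}$: evaluating at $z=-r$ gives $(1-r)^\g|g(-r)|=1$, so $g\notin A_0^{-\g}$, whereas $(1-|z|)^{\g+1}|g(z)\vp(z)|$ factors as $\bigl[(1-|z|)/|1+z|\bigr]^\g\cdot\bigl[(1-|z|)/|1-z|\bigr]$, a product of two factors each $\le 1$; since $|1-z|^2+|1+z|^2=2(1+|z|^2)\ge 2$ forces at least one of $|1-z|,|1+z|$ to exceed $1$ at every point, the corresponding factor is bounded by $(1-|z|)^{\min(1,\g)}$, driving the product uniformly to $0$ and yielding $g\vp\in A_0^{-(\g+1)}$. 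The main technical obstacle is the little-o Hardy--Littlewood equivalence, which is not literally \cite[Theorem 5.5]{Du} (a big-O statement) but follows from the two-sided argument sketched above.
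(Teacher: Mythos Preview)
Your proof is correct and follows the same overall scaffold as the paper---adapt the four steps of Theorem~\ref{largerextbig}---but two of those steps are executed differently. For the little-$o$ Hardy--Littlewood equivalence ($F\in A_0^{-\g}$ iff $F'\in A_0^{-(\g+1)}$), the paper avoids your direct Cauchy/radial-integration estimates: it simply notes that $D\colon A^{-\g}\to A^{-(\g+1)}$ and $J\colon A^{-(\g+1)}\to A^{-\g}$ are continuous (the big-$O$ Hardy--Littlewood theorem), that polynomials are invariant under both, and that polynomials are dense in each $A_0^{-\beta}$; closedness of $A_0^{-\beta}$ in $A^{-\beta}$ then forces $D$ and $J$ to restrict to maps between the little-$o$ spaces. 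This density argument is shorter but less self-contained than your hands-on proof. For the proper containment $A_0^{-\g}\subsetneqq[\sC,A_0^{-\g}]$, the paper produces no witness at all: instead it observes that equality would make $\sC_{\g,0}$ a Banach-space isomorphism of $A_0^{-\g}$ onto itself, forcing $0\notin\sigma(\sC_{\g,0})$, contrary to Theorem~\ref{T.Pe-AP}(ii). Your explicit function $g(z)=(1+z)^{-\g}$ is a correct constructive alternative, and your observation that the witness $(1-z)/(1+z)^{\g+1}$ from Theorem~\ref{largerextbig} fails in the $A_0$-setting is precisely what the paper later exploits (Proposition~\ref{p36}(ii)) to show that $[\sC,A_0^{-\g}]\subsetneqq[\sC,A^{-\g}]$.
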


\begin{proof}
The Banach space $[\sC, A_0^{-\g}]$ is isometrically isomorphic to $A_0^{-\g}$;
adapt the  argument from  the beginning of the proof of Theorem \ref{largerextbig}.

To see  that $A_0^{-\g} \su [\sC, A_0^{-\g}]$ is proper  assume, on the contrary,
that $A_0^{-\g} = [\sC, A_0^{-\g}]$. Then  $\sC_{\g,0}:    [\sC , A_0^{-\g}] = A_0^{-\g} \rightarrow A_0^{-\g}$ is a Banach space
isomorphism and so $0 \notin \sigma(\sC_{\g,0})$;  contradiction to  Theorem \ref{T.Pe-AP}(ii).

To establish \eqref{eq.3.3} is similar to the proof of \eqref{eq.hd} in
Theorem \ref{largerextbig}. It is enough to keep in mind the following two facts.

\textit{Fact 1. } Let $f \in H(\D)$ satisfy $f(0)=0$. Then $f \in A_0^{-\g}$ if and only if $f(z)/z \in A_0^{-\g}$.

\textit{Fact 2. } A function $f \in H(\D)$ belongs to $A_0^{-\g}$ if and only if $f' \in A_0^{- (\g+1)}$.

Indeed, by  \cite[Theorem 5.5]{Du} the differentiation operator $D: A^{-\g} \rightarrow A^{-(\g+1)}$, given by
 $D(f):=f',$ and the integration operator $J: A^{- (\g+1)}  \rightarrow A^{-\g}$, given by
 $ J(f)(z):= \int_0^z f(\zeta) d \zeta$, for $ z \in \D $, are continuous (see also \cite[Theorem 2.1(a) \& Proposition 2.2(a)]{HL}).
 Since the space of polynomials $\mathcal{P}$  is dense in $A_0^{-\beta}$ for each $\beta >0$ and $\mathcal{P}$
  is invariant for both $D$ and $J$, it follows that their restrictions $D: A_0^{-\g} \rightarrow A_0^{-(\g+1)}$
  and $J: A_0^{-(\g+1)}  \rightarrow A_0^{-\g}$ are continuous. This implies Fact 2.

Now that \eqref{eq.3.3} is established, one can argue as in the proof of Theorem~\ref{largerextbig} to show that the stated norms are equivalent.
\end{proof}

\begin{remark}{\rm
(i) As noted in Section 1, for every $\g > 0$ the Banach space $A^{-\g}$ (resp. $A^{-\g}_0$) is isomorphic to
$\ell^\infty $ (resp.\ $c_0$). In particular,  $A^{-\g}$ is non-separable whereas $A^{-\g}_0$ is separable. Also,
$A^{-\g}_0$ is not weakly sequentially complete and hence, neither is $A^{-\g}$ (as it contains $A^{-\g}_0$ as a closed
subspace). Since both $c_0$ and $\ell^\infty $ fail to have the Radon-Nikodym property, the same is true for
$A^{-\g}$ and $A^{-\g}_0$. And so on. Since $A^{-\g}$ (resp. $A^{-\g}_0$) is isomorphic to $[\sC, A^{-\g}]$ (resp. to
$[\sC, A^{-\g}_0]$), we can conclude that the optimal domain spaces $[ \sC, A^{-\g}]$ and $[\sC, A^{-\g}_0]$ inherit
such properties as those mentioned above (and others) from $A^{-\g}$ and $A^{-\g}_0$, respectively.

(ii) Let $f, g \in H (\D)$ satisfy $|g (z)| \le |f (z)|$ for $z \in \D$. If $f \in A^{-\g}$ (resp. $f \in A^{-\g}_0$), for
$\g > 0  $, then it is routine to check that also $g \in A^{-\g}$ (resp. $g \in A^{-\g}_0$). It follows from \eqref{eq.hd}
(resp. \eqref{eq.3.3}) that this useful property carries over to the optimal domain space $[\sC, A^{-\g}]$
(resp. $[\sC, A^{-\g}_0]$).

(iii) According to \cite[p.92]{Du} there exists $g \in H^\infty (\D) $ such that $g'$ fails to have boundary values at
a.e. point in $\partial \D$. Since $H^\infty (\D) \su A^{-\g}_0$, for every $\g > 0$, and the differentiation operator
$D: A^{-\g}_0  \to A^{-(\g+1)}_0$ is continuous, it follows that $g' \in A^{-(\g+1)}_0$. Accordingly, for every $\beta > 1$
there exists a function in $A^{-\beta}_0$ which fails to have a.e. boundary values. Since $A^{-\beta}_0 \su
A^{-\beta} \su [\sC, A^{-\beta}]$ and $A^{-\beta}_0 \su [\sC, A^{-\beta}_0]$  we can conclude, for every $\beta > 1$,
that there exist functions in the spaces $A^{-\beta}_0, A^{-\beta}, [\sC, A^{-\beta}_0]$ and $[\sC, A^{-\beta}]$ which
fail to have a.e. boundary values. \Bo
}
\end{remark}

For $\g > 0 $, the following result shows that the largest growth space $A^{-\beta}$ that is contained in
$[\sC, A^{-\g}]$ is $A^{-\g}$. The same is true for $A^{-\g}_0$ and $[\sC, A^{-\g}_0]$.
\begin{prop}\label{p34}
Let $\g > 0 $.
For each $\beta > \g$, the space $A^{-\beta} \nsubseteq [\sC, A^{-\g}]$ and the space  $A_0^{-\beta} \nsubseteq [\sC, A_0^{-\g}]$.
\end{prop}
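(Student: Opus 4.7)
The plan is to exhibit a single function $f \in H(\D)$ that lies in $A_0^{-\beta}$ but is not a member of $[\sC, A^{-\g}]$. Because $A_0^{-\beta} \subseteq A^{-\beta}$ and $[\sC, A_0^{-\g}] \subseteq [\sC, A^{-\g}]$, such an $f$ will simultaneously witness both non-inclusions in the statement. Via Theorems \ref{largerextbig} and \ref{largerextsmall}, the task reduces to producing $f \in A_0^{-\beta}$ for which $f(z)/(1-z) \notin A^{-(\g+1)}$, since $A_0^{-(\g+1)} \subseteq A^{-(\g+1)}$ so failing the larger membership automatically fails the smaller one.

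Pick any $\mu$ with $\g < \mu < \beta$, e.g.\ $\mu = (\g+\beta)/2$, and set $f(z) := 1/(1-z)^\mu$ for $z \in \D$. The triangle inequality $|1-z| \ge 1 - |z|$ gives $(1-|z|)^\mu |f(z)| \le 1$, so $f \in A^{-\mu}$. Since $\mu < \beta$, the nesting $A^{-\mu} \subseteq A_0^{-\beta}$ recorded in Section~1 places $f$ in $A_0^{-\beta}$.

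It remains to check that $f\vp \notin A^{-(\g+1)}$, where $\vp(z) = 1/(1-z)$. Restricting to $z = r \in [0,1)$ yields
\begin{equation*}
(1-r)^{\g+1}\Big|\frac{f(r)}{1-r}\Big| = \frac{(1-r)^{\g+1}}{(1-r)^{\mu+1}} = (1-r)^{\g-\mu} \longrightarrow +\infty \quad \text{as } r \to 1^-,
\end{equation*}
because $\g - \mu < 0$. Hence $\|f\vp\|_{-(\g+1)} = \infty$, and Theorem~\ref{largerextbig} delivers $f \notin [\sC, A^{-\g}]$, completing the argument. The proof is essentially book-keeping around the optimal domain descriptions; there is no real obstacle beyond spotting the intermediate exponent $\mu$, whose role is to keep $f$ inside $A_0^{-\beta}$ while forcing $f/(1-z) = 1/(1-z)^{\mu+1}$ to grow too fast along the radius to belong to $A^{-(\g+1)}$.
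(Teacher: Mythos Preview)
Your proof is correct. For the first non-inclusion $A^{-\beta}\nsubseteq[\sC,A^{-\g}]$ the paper does essentially the same thing, taking $f(z)=1/(1-z)^{\beta}$ directly; your intermediate exponent $\mu$ is not needed there.

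Where your argument genuinely differs is in the $A_0$-case. The paper's function $1/(1-z)^{\beta}$ lies in $A^{-\beta}$ but not in $A_0^{-\beta}$, so it cannot serve as a counterexample for $A_0^{-\beta}\nsubseteq[\sC,A_0^{-\g}]$. The authors instead argue indirectly: assuming $A_0^{-\beta}\su[\sC,A_0^{-\g}]$, they use the closed graph theorem to make $\sC\colon A_0^{-\beta}\to A_0^{-\g}$ continuous and then pass to biduals (recalling $(A_0^{-\g})^{**}=A^{-\g}$ and $(\sC_{\g,0})^{**}=\sC_\g$) to obtain $A^{-\beta}\su[\sC,A^{-\g}]$, contradicting the first part. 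Your device of slipping in an exponent $\mu\in(\g,\beta)$ forces $f=1/(1-z)^\mu$ into $A^{-\mu}\su A_0^{-\beta}$ while still making $f\vp$ blow up in the $A^{-(\g+1)}$-norm, so one concrete function handles both claims at once. Your route is more elementary and self-contained; the paper's route, on the other hand, illustrates the recurring theme that statements about $\sC_{\g,0}$ can be transferred to $\sC_\g$ (and back) via biduality, a technique used elsewhere in the paper.
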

\begin{proof}
That $A^{-\beta} \nsubseteq [\sC, A^{-\g}]$ is a direct consequence of Theorem \ref{largerextbig}
and the fact that the function $f(z):=1/(1-z)^{\beta} \in A^{-\beta} \setminus [\sC, A^{-\g}]$ because $f(z)/(1-z) \notin A^{-(\g+1)}$.

Suppose that $A_0^{-\beta} \su [\sC, A_0^{-\g}]$. Then $\sC ( A_0^{-\beta }) \su A^{-\g}_0$. Using that $A^{-\g}_0 \su A_0^{-\beta}$
continuously and a closed graph argument it follows that the natural inclusion map $\Phi $ for  $A^{-\beta}_0 \su [\sC, A^{-\g}_0]$ is continuous.
Hence, also  $\sC : A_0^{-\beta}  \to A_0^{-\g}$ is continuous (being the composition of $\Phi $ and $\sC :  [\sC , A_0^{-\g}] \to  A_0^{-\g}$).
Passing to the bidual yields that $\sC : A^{-\beta} \to A^{-\g} $ is continuous,
i.e., $A^{-\beta} \su [\sC, A^{-\g} ]$, which is a contradiction to the previous paragraph. So, $A^{-\beta}_0 \nsubseteq [\sC, A^{-\g}_0]$.
\end{proof}

Given $\g > 0 $, Proposition \ref{p34} raises the question of whether there are any weighted Banach spaces of analytic
functions $X$ on $\D$ which satisfy $A^{-\g} \su X \su [\sC, A^{-\g}]$.

A weight $v: \D \rightarrow [0,\infty)$ is called \textit{essential} if it is radial  (i.e., $v(z)=v(|z|)$ for  $z \in \D$),
continuous, satisfies that $v(r)$ decreases to $0$ as $r$ approaches $1$, and  there exists a constant  $d>0$ such that for
 each $z_0 \in \D$ there is $f_0 \in H(\D)$ such that $|f_0(z_0)| \ge d / v (z_0)$ and $v(z)|f_0(z)| \leq 1$ for each $z \in \D$
(i.e., $\|f_0\|_v \le 1 $ in the notation of \eqref{eq.3.4} below).  It is established in \cite{BDL} that a radial continuous weight
$v : \D \to (0, \infty )$ which is decreasing to $0$ in $[0,1)$ as $r$ approaches $1$ is essential if and only if there exists a
weight $w : \D \to (0, \infty)$ satisfying the same assumptions as for $v$ such that the function $ r \mapsto  - \log (w (e^r))$ is
convex and, for some constant $c>1$, the weight $w$ satisfies
$$
\frac 1 c \, w (r) \le v (r) \le c w (r) , \qquad r \in [0,1)  .
$$
In other words, radial continuous weights in $\D$ that are essential are those which are equivalent to ``logarithmic convex weights''.
All the weights $v_{\g}(z)=(1-|z|)^{\g}$, for $\g > 0$, considered  in this paper are essential.
 For further examples and properties of essential weights we refer  to  \cite{BBT}.

Given an essential weight $v$, the \textit{weighted  space associated with $v$} is defined by
\begin{equation}\label{eq.3.4}
    H_v^\infty:= \{f \in H(\D) \ | \ \|f\|_v:=\sup_{z \in \D} v(z) |f(z)| < \infty \};
\end{equation}
it  is a Banach space of analytic functions on $\D$ for the norm $\|.\|_v$. According to
\cite[Lemma 1]{SW}, point evaluations on $\D$ belong to $(H^\infty_v)^*$.

\begin{prop}
Let $\g > 0 $ and  $v$ be an essential weight on $\D$. If it is the case that $A^{-\g} \su H_v^\infty \su [\sC,A^{-\g}]$,
then there exists $b>0$ such that
\begin{equation}\label{eq.3.5}
   (1/b)(1-|z|)^{\g} \leq v(z) \leq b (1-|z|)^{\g}   , \qquad z \in \D .
\end{equation}
In particular,  $A^{-\g} = H_v^\infty$, both  algebraically and topologically.
\end{prop}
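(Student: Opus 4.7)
The plan is to extract the two-sided comparison $(1-|z|)^{\g} \asymp v(z)$ from the two hypothesised inclusions, each of which is automatically continuous by the closed graph theorem (both spaces being Banach spaces of analytic functions on $\D$). Once the weight equivalence is established, the identification $A^{-\g} = H_v^\infty$ as Banach spaces is immediate since both norms are sup-norms against equivalent weights.

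For the upper bound $v(z) \le b(1-|z|)^{\g}$, I would exploit the continuous inclusion $A^{-\g} \hookrightarrow H_v^\infty$, which delivers a constant $C_1 > 0$ with $\|f\|_v \le C_1 \|f\|_{-\g}$ for every $f \in A^{-\g}$. I would test this inequality on the standard extremal family $f_{z_0}(z) := 1/(1-\bar{z_0}z)^{\g}$. The estimate $|1-\bar{z_0}z| \ge 1-|z|$ gives $\|f_{z_0}\|_{-\g}\le 1$, while $|f_{z_0}(z_0)| = (1-|z_0|^2)^{-\g}$. Evaluating the norm inequality at $z_0$ yields $v(z_0) \le 2^\g C_1 (1-|z_0|)^{\g}$, uniformly in $z_0 \in \D$.

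For the lower bound $v(z) \ge (1/b)(1-|z|)^\g$, the essential weight hypothesis supplies, for each $z_0 \in \D$, some $f_0 \in H_v^\infty$ with $\|f_0\|_v \le 1$ and $|f_0(z_0)| \ge d/v(z_0)$. The continuous inclusion $H_v^\infty \hookrightarrow [\sC, A^{-\g}]$ combined with the norm equivalence $\|f\|_{[\sC,A^{-\g}]} \asymp \|f\vp\|_{-(\g+1)}$ from Theorem~\ref{largerextbig} produces a constant $C_2 > 0$ such that $\|f_0 \vp\|_{-(\g+1)} \le C_2$. Specialising now to $z_0 = r \in (0,1)$ on the positive real axis makes $|1-z_0|=1-|z_0|$, which creates a crucial cancellation in the weight: the bound at $z = r$ reads $(1-r)^{\g+1} |f_0(r)|/(1-r) = (1-r)^{\g} |f_0(r)| \le C_2$. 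Combining with $|f_0(r)| \ge d/v(r)$ gives $v(r) \ge (d/C_2)(1-r)^{\g}$, and the radial hypothesis on $v$ then propagates this inequality to every $z \in \D$ of modulus $r$.

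The main obstacle is this reverse estimate: one must convert a bound carrying the weight $(1-|z|)^{\g+1}/|1-z|$ into one carrying the weight $(1-|z|)^{\g}$, which requires the identity $|1-z| = 1-|z|$ to hold; this forces the evaluation to be made on the positive real axis, and the radiality of $v$ is precisely what lets us transfer the conclusion back to arbitrary $z \in \D$. Once the two inequalities are in hand, setting $b := \max\{2^\g C_1, C_2/d\}$ establishes \eqref{eq.3.5}, and the equivalence of $\|\cdot\|_v$ and $\|\cdot\|_{-\g}$ gives the final identification $A^{-\g} = H_v^\infty$ algebraically and topologically.
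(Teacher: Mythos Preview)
Your proposal is correct and follows essentially the same approach as the paper's proof: continuity of both inclusions via the closed graph theorem, testing the first inclusion on the extremal family $1/(1-\bar{z_0}z)^{\g}$ (the paper uses the variant $1/(1-z|u|u^{-1})^{\g}$, which avoids the harmless factor $2^{\g}$), and for the reverse inequality using the essential-weight test function together with the norm description of $[\sC,A^{-\g}]$ from Theorem~\ref{largerextbig}, specialising to $z=r\in(0,1)$ and invoking radiality of $v$. The paper's argument is identical in structure and in its use of the positive real axis to collapse $(1-|z|)^{\g+1}/|1-z|$ to $(1-|z|)^{\g}$.
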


\begin{proof}
The assumption that  $A^{-\g} \su H_v^\infty \su [\sC,A^{-\g}]$ and the closed graph theorem imply that both inclusions are continuous.
In particular,   there is $M>0$ such that $\|f\|_v \leq M \|f\|_{-\g}$ for each $f \in A^{-\g}$ and, by Theorem \ref{largerextbig},
 $\|g/(1-z)\|_{-(\g+1)} \leq K \|g\|_v$ for each $g \in H_v^\infty$ (and some $K > 0 $).

Given an arbitrary point $u \in \D \setminus \{0\}$, we set $f_0(z): =1/(1- z|u|u^{-1})^{\g}$, for $ z \in \D$.
For $u=0$, define   $f_0(z)=1$, for $ z \in \D$. Then $\|f_0\|_{-\g}=1$ and $f_0(u)=1/(1-|u|)^{\g}$.
Moreover,  $v(u)|f_0(u)| \leq \|f_0\|_v \leq M$. This implies that  $v(u) \leq M (1-|u|)^{\g}$ for each $u \in \D$.

Now fix $w \in \D$ arbitrary. Since the weight $v$ is essential, there is $d>0$ (independent of $w$) and
  $g_0 \in H(\D)$ with $\|g_0\|_v \leq 1$ (i.e., $g_0 \in H^\infty_v$) such that  $|g_0(w)| \geq d/v(w)$. Observe that
$$
\|g_0 / (1-z)  \|_{- (\g+1)} = \sup_{z \in \D}(1-|z|)^{\g+1} \frac{|g_0(z)|}{|1-z|} \leq K \|g_0\|_v \leq K.
$$
Accordingly,
$$
\frac{d}{v(w)} \frac{(1-|w|)^{\g+1}}{|1- w|}   \le (1- |w|)^{\g +1 }   \frac{|g_0 (w)|}{|1-w|} \leq
\sup_{z \in \D}(1-|z|)^{\g+1} \frac{|g_0(z)|}{|1-z|} \leq K
$$
from which it follows that
$$
\frac{(1-|w|)^{\g+1}}{|1- w|} \leq \frac{K}{d} v(w) , \qquad w \in \D .
$$
 In particular, $(1-r)^{\g} \leq (K/d) v(r)$ for  $r \in [0,1)$.
Setting $b:=\max \{M,K/d  \}$, yields \eqref{eq.3.5}.
\end{proof}

\begin{prop}\label{p36}
 Let  $\g> 0$.

{\rm(i)} The containment $A^{-\g} \su [\sC, A^{-\g}]$  is proper.

{\rm(ii)}
$ [\sC, A^{-\g}_0]$ is a proper, closed subspace of $ [\sC, A^{-\g}]$.

{\rm(iii)} The spaces $A^{-\g}$ and $[\sC, A^{-\g}_0]$ are non-comparable. That is,
$$
A^{-\g} \nsubseteq [\sC, A^{-\g}_0] \quad  \mbox{ and  } \quad  [\sC, A^{-\g}_0] \nsubseteq A^{-\g}   .
$$
\end{prop}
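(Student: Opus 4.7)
My plan is to exploit the characterizations in Theorems~\ref{largerextbig} and~\ref{largerextsmall}, which identify $[\sC,A^{-\g}]$ (resp.\ $[\sC,A_0^{-\g}]$) with the space of $f\in H(\D)$ satisfying $f\vp\in A^{-(\g+1)}$ (resp.\ $f\vp\in A_0^{-(\g+1)}$), and through which $\sC$ is an isometric isomorphism from $[\sC,A^{-\g}]$ onto $A^{-\g}$ that restricts to an isomorphism of $[\sC,A_0^{-\g}]$ onto $A_0^{-\g}$. Assertion (i) is in fact already established at the end of the proof of Theorem~\ref{largerextbig} by the explicit function $g(z)=(1-z)/(1+z)^{\g+1}\in[\sC,A^{-\g}]\setminus A^{-\g}$; I would simply invoke that example.

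For part (ii), because $\sC\colon [\sC,A^{-\g}]\to A^{-\g}$ is continuous and $A_0^{-\g}$ is closed in $A^{-\g}$, the identity $[\sC,A_0^{-\g}]=\sC^{-1}(A_0^{-\g})$ makes $[\sC,A_0^{-\g}]$ a closed subspace of $[\sC,A^{-\g}]$. Properness then follows by choosing any $h\in A^{-\g}\setminus A_0^{-\g}$ — for example $h(z)=1/(1-z)^\g$, which gives $(1-r)^\g|h(r)|=1$ for all $r\in[0,1)$ — and taking $f:=\sC^{-1}(h)\in H(\D)$: by construction $f\in[\sC,A^{-\g}]$ while $\sC(f)=h\notin A_0^{-\g}$, so $f\notin[\sC,A_0^{-\g}]$.

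Part (iii) splits into two non-containments. For $A^{-\g}\nsubseteq[\sC,A_0^{-\g}]$ the same $f(z)=1/(1-z)^\g$ works: clearly $f\in A^{-\g}$, but $f\vp=1/(1-z)^{\g+1}$ satisfies $(1-r)^{\g+1}|(f\vp)(r)|=1$ at every $r\in(0,1)$, so $f\vp\notin A_0^{-(\g+1)}$ and hence $f\notin[\sC,A_0^{-\g}]$ by Theorem~\ref{largerextsmall}. The reverse inclusion $[\sC,A_0^{-\g}]\nsubseteq A^{-\g}$ is the only subtle point and will be the main obstacle: the required $f$ must be singular at a boundary point other than $1$, so that the factor $(1-z)=1/\vp$ implicit in the definition of $[\sC,A_0^{-\g}]$ cannot absorb its growth. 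I would try the concrete candidate $f(z):=1/(1+z)^{\g+\ve}$ for a fixed $0<\ve<1$, interpreted through the principal branch of $\log(1+z)$ (well-defined on $\D$ since $\op{Re}(1+z)>0$). Along $z=-r$, $r\to 1^-$, one has $(1-r)^\g|f(-r)|=(1-r)^{-\ve}\to\infty$, proving $f\notin A^{-\g}$. To verify $f\vp\in A_0^{-(\g+1)}$, i.e.\ $(1-|z|)^{\g+1}/(|1+z|^{\g+\ve}|1-z|)\to 0$ as $|z|\to 1$, I would split $\D$ into three boundary regions: near $z=1$ the estimate $1-|z|\le|1-z|$ reduces the quotient to $(1-|z|)^\g\to 0$; near $z=-1$ the inequality $|1+z|\ge 1-|z|$ leaves a residual factor $|1+z|^{1-\ve}\to 0$; and away from $\{\pm 1\}$ everything but $(1-|z|)^{\g+1}$ stays bounded above and below. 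This elementary case analysis is the only substantive calculation in the argument.
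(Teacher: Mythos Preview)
Your proof is correct. Parts (i) and (ii) proceed essentially as in the paper (the paper reuses $g(z)=(1-z)/(1+z)^{\g+1}$ for properness in (ii) rather than pulling back $h(z)=1/(1-z)^\g$ via $\sC^{-1}$, but the underlying idea is the same).

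Part (iii), however, takes a genuinely different route. For $A^{-\g}\nsubseteq[\sC,A_0^{-\g}]$ the paper argues by contradiction: if the inclusion held, then $\sC(A^{-\g})\subseteq A_0^{-\g}$, i.e.\ $(\sC_{\g,0})^{**}((A_0^{-\g})^{**})\subseteq A_0^{-\g}$, forcing $\sC_{\g,0}$ to be weakly compact, contrary to Corollary~2.2. For $[\sC,A_0^{-\g}]\nsubseteq A^{-\g}$ the paper again argues indirectly: assuming the inclusion and invoking the closed graph theorem, polynomial approximation in $A^{-\g}$ from \cite{BBG}, and compactness of closed balls of $A^{-\g}$ in the Montel space $H(\D)$, one deduces $[\sC,A^{-\g}]\subseteq A^{-\g}$, contradicting part~(i). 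Your approach replaces both contradictions by explicit witnesses --- $1/(1-z)^\g$ and $1/(1+z)^{\g+\ve}$ --- and is more elementary and self-contained: no appeal to weak compactness, biduality, or the results of \cite{BBG} is needed. The paper's arguments, on the other hand, emphasize the operator-theoretic structure (notably the link to weak compactness of $\sC_{\g,0}$) and would transfer more readily to general weighted settings where concrete test functions are harder to write down.
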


\begin{proof}
(i) Theorem \ref{largerextbig} shows that $A^{-\g}$ is \textit{properly} contained in $[\sC, A^{-\g}]$.

(ii) Since
$A^{-\g}_0 \su A^{-\g}$, it is clear that $[\sC, A^{-\g}_0] \su [\sC, A^{-\g}]$.    Moreover,
$ [\sC, A_0^{-\g}]$ has the same norm as that in $ [\sC, A^{-\g}]$; see \eqref{eq.norm1}. Since
$A_0^{- \g}$ is a closed subspace of $A^{- \g}$, it is routine to verify that $ [\sC, A_0^{-\g}]$ is
closed in $ [\sC, A^{-\g}]$.
It was noted in the proof of
Theorem~\ref{largerextbig} that the function $g (z):= (1-z) / (1+z)^{\g+1}$ for $z \in \D$ belongs to
$[\sC, A^{-\g}]$. However,
$$
(1-|z|)^{\g+1} \Big|\frac{g (z)}{1-z}\Big| = \Big (\frac{1-|z|}{|1+z|}\Big)^{\g+1} , \qquad z \in \D ,
$$
has the  value 1 for every real $z \in (-1,0]$ and so, via \eqref{eq.3.3}, $g \notin [\sC, A^{-\g}_0]$.
Hence, the containment $ [\sC, A_0^{-\g}] \su  [\sC, A^{-\g}]$ is \textit{proper}.

(iii) We first show that $A^{-\g} \nsubseteq [\sC, A_0^{-\g}]$. On the contrary, if $A^{-\g} \su [\sC, A_0^{-\g}]$, then it follows that
$ \sC ( A^{-\g}) \su  A_0^{-\g}$. Since $( A_0^{-\g})^{**} = A^{-\g}  $ and
$(\sC_{\g,0})^{**} = \sC_\g$, we can conclude that $(\sC_{\g,0})^{**} (( A_0^{-\g})^{**}) \su  A_0^{-\g}$. But, this
contradicts the fact that $\sC_{\g, 0}$ is not weakly compact; see Corollary 2.2 above and \cite[Ch.\,VI, Sect.\ 4, Theorem 2]{DSI}.
Accordingly, $ A^{-\g} \nsubseteq [\sC,  A_0^{-\g}]$.

To show that $[\sC, A_0^{-\g}] \nsubseteq A^{-\g}$ assume, on the contrary, that $  [\sC, A_0^{-\g}] \su A^{-\g}$. Via part (i), to
achieve a contradiction it suffices to deduce that $[\sC, A^{-\g}] \su A^{-\g}$.

By the closed graph theorem the inclusion $ [\sC, A_0^{-\g}] \su A^{-\g}$ is continuous. Hence, there exists $M>0$ such that
$$
\|h\|_{-\g} \le M \|h\|_{ [\sC, A^{-\g}]}  , \qquad h \in [\sC, A_0^{-\g}],
$$
after noting that the spaces $ [\sC, A_0^{-\g}]$ and $ [\sC, A^{-\g}]$ have the same norm.  Fix  $f \in  [\sC, A^{-\g}]$, in which case
$\sC (f) \in A^{-\g}$. According to Proposition 1.2 of \cite{BBG} there exists a sequence of polynomials $\{p_j\}_{j \in \N }
\su A^{-\g}_0 \su [\sC, A_0^{-\g}]  $  which converges to $\sC (f)$ in $H (\D)$ and satisfies $\|p_j\|_{-\g} \le \|\sC (f)\|_{-\g}$
for $j \in \N $. Then $\{\sC^{-1} (p_j)\}_{j \in \N }$ converges to $f$ in $H (\D)$ with $\{\sC^{-1} (p_j)\}_{j \in \N } \su [\sC, A_0^{-\g}]
\su A^{-\g}$ (where the last containment is being assumed). Moreover, $\{\sC^{-1} (p_j)\}_{j \in \N }$ is a bounded set in $A^{-\g}$
because
$$
\|\sC^{-1} (p_j)\|_{- \g} \le M \|\sC^{-1} (p_j)\|_{ [\sC, A^{-\g}]} = M \|p_j\|_{- \g} \le M \|C (f)\|_{- \g} , \qquad j \in \N .
$$
Since closed balls of $A^{-\g}$ are compact sets in the Montel space $H (\D)$, it follows that $f \in A^{-\g}$. This shows that
$ [\sC, A^{-\g}] \su A^{-\g} $ which yields the required contradiction.
\end{proof}

Proposition \ref{p36} shows that the Banach spaces of analytic functions $A^{-\g}$ and $[\sC, A_0^{-\g}]  $ are two
\textit{non-comparable, proper} vector subspaces of the optimal domain space $[\sC, A^{-\g}]$, that is, $\sC$  maps
both of these spaces into $ A^{-\g}$.

\bigskip

\textbf{Acknowledgements.}
The research of the first two authors was partially
supported by the projects  MTM2013-43540-P and GVA Prometeo II/2013/013.

\bigskip

\end{document}